\theoremstyle{plain}
\newtheorem{Theorem}{Theorem}[section]
\newtheorem{Proposition}[Theorem]{Proposition}
\newtheorem{Lemma}[Theorem]{Lemma}
\newtheorem{Corollary}[Theorem]{Corollary}
\theoremstyle{definition}
\newtheorem{Definition}[Theorem]{Definition}
\newtheorem{Remark}[Theorem]{Remark}
\newcommand{\setN}{\mathbb{N}}
\newcommand{\setQ}{\mathbb{Q}}
\newcommand{\setR}{\mathbb{R}}
\newcommand{\setZ}{\mathbb{Z}}
\newcommand{\setJ}{\mathcal{J}}
\newcommand{\setUrj}{\mathcal{U}} 
\newcommand{\setUtilde}{\tilde{\mathcal{U}}}
\newcommand{\rngMhat}{\hat{\mathcal{M}}}
\newcommand{\schA}{\mathbb{A}}
\newcommand{\schG}{\mathbb{G}}
\newcommand{\shfO}{\mathscr{O}}
\newcommand{\mtvL}{\mathbb{L}}  
\newcommand{\Mst}{\mathnormal{M}_{\text{\textnormal{st}}}} 
\DeclareMathOperator{\ord}{ord}
\DeclareMathOperator{\Img}{Im}
\newcommand{\floor}[1]{\left\lfloor#1\right\rfloor}
\newcommand{\ceil}[1]{\left\lceil#1\right\rceil}
\newcommand{\vfunc}{\boldsymbol{v}}     
\DeclareMathOperator{\Spec}{Spec}
\newcommand{\spcJ}[1][\infty]{\mathnormal{J}_{#1}}
\newcommand{\GCov}[1][G]{\operatorname{\mathnormal{#1}-Cov}}
\newcommand{\vect}[1]{\bm{#1}}
\newcommand{\modBdd}{\equiv_{\text{bdd}}} 
\title{%
  On convergence of stringy motives of wild \(p^{n}\)-cyclic quotient
  singularities
}
\author[M. Tanno]{Mahito Tanno}
\address{
  Department of Mathematics, Graduate School of Science, Osaka University,
  Toyonaka, Osaka 560-0043, Japan}
\email{\textsf{mahito@presche.me}}
\email{\textsf{u529757k@ecs.osaka-u.ac.jp}}
\subjclass[2010]{Primary 14E16; 
  Secondary 11S15,              
  14B05,                        
  14E18,                        
  14G17,                        
  14R20                         
}
\begin{document}
\begin{abstract}
  The wild McKay correspondence, a variant of the McKay correspondence in
  positive characteristics, shows that stringy motives of quotient varieties
  equal some motivic integrals on the moduli space of of the Galois covers of a
  formal disk.  In this paper, we determine when the integrals converge for the
  the case of cyclic groups of prime power order.  As an application, we give a
  criterion for the quotient variety being canonical or log canonical.
\end{abstract}

\maketitle

\section{Introduction}\label{sec:introduction}
Let \(k\) be an algebraically closed field of characteristic \(p > 0\),
\(G = \setZ / p^{n} \setZ\) the cyclic group of order \(p^{n}\), and \(V\) a
\(G\)-representation of dimension \(d\).  We identify \(V\) with the
\(d\)-dimensional affine space \(\schA_{k}^{d}\).  Yasuda proved the wild McKay
correspondence theorem~\cite[{Corollary~16.3}]{Yasuda2019:Motivic}, which shows
that the stringy motive \(\Mst(V/G)\) of the quotient variety \(V/G\) equals the
integral of the form \(\int_{\GCov(D)} \mtvL^{d - \vfunc}\).  Here \(\GCov(D)\)
denotes the moduli space of \(G\)-covers of \(D = \Spec k[[t]]\) and \(\vfunc\)
the \(\vfunc\)-function associated to given \(G\)-representation \(V\).  Yasuda
and the author~\cite{Tanno2020:Wild} give an explicit formula for the
\(\vfunc\)-function, but it is not easy to compute the integrals in general.

The subject of this paper is the convergence of the integrals
\(\int_{\GCov(D)} \mtvL^{d - \vfunc}\).  When \(n \leq 2\), we already have
criteria \cite{Yasuda2019:Discrepancies,Tanno2020:Wild}.  We generalize them
to general \(n\) and then apply it to study singularities of the quotient
varieties \(V/G\).

The moduli space \(\GCov(D)\) can be described by the Artin--Schreier--Witt
theory and hence the integral can be written as a infinite series of the form
\begin{equation}
  \int_{\GCov(D)} \mtvL^{d - \vfunc} =
  \sum_{\vect{j}} [\GCov(D; \vect{j})] \mtvL^{d - \vfunc|_{\GCov(D; \vect{j})}},
\end{equation}
where \(\GCov(D) = \coprod_{\vect{j}} \GCov(D; \vect{j})\) is a stratification.
Moreover, the value \(\vfunc(E)\) of the \(\vfunc\)-function at
\(E \in \GCov(D; \vect{j})\) can be written in terms of (upper) ramification
jumps of the \(G\)-extension \(L/k((t))\) corresponding to \(E\).  By
considering the infinite series above as one of functions with upper
ramification jumps as variables, we see that the integral
\(\int_{\GCov(D)} \mtvL^{d - \vfunc}\) converges if and only if some linear
function tends to \(- \infty\).  For an indecomposable \(G\)-representation
\(V\) of dimension \(d\), we define the following invariants:
\begin{align}
  S_{d}^{(m)}
  &\coloneq \sum_{\substack{0 \leq i_{0} + i_{1} p + \dotsb i_{n -
          1} p^{n - 1} < d, \\ 0 \leq i_{0}, i_{1}, \dotsc, i_{n - 1} < p}}
  i_{m} \quad (0 \leq m \leq n - 1), \\
  D_{V}^{(m)}
  &\coloneq p^{n - 1} \left(
    S_{d}^{(m)} - (p - 1) \sum_{l = 0}^{m - 1} p^{m - 1} S_{d}^{(l)}\right)
    \quad (0 \leq m \leq n - 1).
\end{align}
We generalize them to decomposable ones in the way that they become additive for
direct sums.
\begin{Theorem}[\cref{thm:cond:converge}]
  Let \(V\) be a \(G\)-representation of dimension \(d\).  The integral
  \(\int_{\GCov(D)} \mtvL^{d - \vfunc_{V}}\) converges if and only if the
  inequalities
  \begin{equation}
    1 - \frac{1}{p^{n - m}}
    - \sum_{l = m}^{n - 1} \frac{D_{V}^{(l)}}{p^{2n - 1 - l}} < 0
    \quad (m = 0, 1, \dotsc, n - 1)
  \end{equation}
  hold.
\end{Theorem}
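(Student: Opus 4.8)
The plan is to reduce the convergence question to an explicit analysis of an infinite series indexed by ramification data, exactly as sketched in the introduction. First I would recall from the Artin--Schreier--Witt description that \(\GCov(D)\) is stratified by the tuple \(\vect{j} = (j_{0}, j_{1}, \dotsc, j_{n-1})\) of (upper) ramification jumps, write
\begin{equation}
  \int_{\GCov(D)} \mtvL^{d - \vfunc_{V}} =
  \sum_{\vect{j}} [\GCov(D; \vect{j})]\, \mtvL^{d - \vfunc_{V}|_{\GCov(D;\vect{j})}},
\end{equation}
and make the dimension \(\dim \GCov(D;\vect{j})\) and the value \(\vfunc_{V}|_{\GCov(D;\vect{j})}\) explicit as functions of \(\vect{j}\) using the formula of \cite{Tanno2020:Wild}. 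Since a series \(\sum_{\vect j}[\GCov(D;\vect j)]\mtvL^{e(\vect j)}\) of this shape converges in \(\hat{\mathcal M}\) precisely when the exponent \(\dim\GCov(D;\vect j)+e(\vect j)\) tends to \(-\infty\) as \(|\vect j|\to\infty\) along the relevant cone of admissible jumps, the problem becomes: determine when the affine-linear function \(\vect j \mapsto \dim\GCov(D;\vect j) + d - \vfunc_{V}|_{\GCov(D;\vect j)}\) is negative-definite on the cone of admissible jump sequences.

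Next I would carry out the bookkeeping that turns this negativity condition into the stated inequalities. Using additivity of \(\vfunc\) and of the invariants \(D_{V}^{(m)}\) under direct sums, it suffices to treat a single indecomposable \(V\), where \(\vfunc_{V}\) is governed by the combinatorial sums \(S_{d}^{(m)}\); the coefficient of \(j_{m}\) in \(\vfunc_{V}|_{\GCov(D;\vect j)}\) works out to a rational multiple of \(D_{V}^{(m)}\) after accounting for how an upper jump at level \(m\) contributes, weighted by the factors \(p^{\bullet}\) coming from the tower \(\setZ/p^{n}\setZ\). In parallel, \(\dim\GCov(D;\vect j)\) contributes the "\(1 - p^{-(n-m)}\)" type terms coming from the dimension count of the Artin--Schreier--Witt parameter spaces at each level \(m\). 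The admissible cone is cut out by the Hasse--Arf integrality/spacing constraints on the jumps, so its extreme rays correspond exactly to "pure level-\(m\)" directions; evaluating the linear function along the \(m\)-th extreme ray and demanding it be negative yields precisely the \(m\)-th inequality
\begin{equation}
  1 - \frac{1}{p^{n-m}} - \sum_{l=m}^{n-1}\frac{D_{V}^{(l)}}{p^{2n-1-l}} < 0,
\end{equation}
and conversely negativity on all extreme rays gives negativity on the whole cone, hence convergence.

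The main obstacle, as I see it, is the precise identification of the coefficient of each jump variable \(j_{m}\) in the linearized exponent: one must push the explicit \(\vfunc\)-formula of \cite{Tanno2020:Wild} through the change of variables between lower and upper ramification jumps (the Herbrand \(\psi\)-function for \(\setZ/p^{n}\setZ\)), and reorganize the resulting multi-index sum \(\sum i_{m}\) into the closed forms \(S_{d}^{(m)}\) and \(D_{V}^{(m)}\). I expect the partial sum \(\sum_{l=0}^{m-1}p^{m-1}S_{d}^{(l)}\) appearing in \(D_{V}^{(m)}\) to arise exactly as the correction coming from the fact that increasing the jump at level \(m\) forces (via Hasse--Arf) the jumps at lower levels to move as well. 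A secondary technical point is justifying that convergence of the motivic series really is equivalent to the exponent tending to \(-\infty\) over the lattice points of the cone — this requires checking that only finitely many strata contribute in each "slice" and that the class \([\GCov(D;\vect j)]\) grows only like a fixed power of \(\mtvL\) relative to its dimension, which follows from the explicit structure of the strata as (piecewise) affine bundles over points. Once these are in hand, the equivalence with the displayed inequalities is a finite linear-programming computation over the extreme rays of the admissible cone.
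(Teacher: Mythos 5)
Your treatment of the connected locus is essentially the paper's own argument: the paper likewise linearizes \(\vfunc_{V}\) in the upper jumps (\cref{prop:reminder-vfunc} gives \(\vfunc_{V}(\vect{u}) \modBdd \sum_{m} D_{V}^{(m)} u_{m}/p^{n}\)), restricts the resulting linear function to the cone \(u_{0} \geq 1\), \(u_{i} \geq p u_{i-1}\), and evaluates it at the vertices of the slices \(u_{n-1} = t\), which are exactly your ``pure level-\(m\)'' extreme rays; this produces the displayed inequalities. So for \(\int_{\GCov^{0}(D)} \mtvL^{d - \vfunc_{V}}\) your plan goes through and coincides with \cref{prop:integral/connected-covers}.

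However, there is a genuine gap: your series runs over all of \(\GCov(D)\), while the explicit \(\vfunc\)-formula you propose to push through the lower-to-upper jump change of variables (\cref{thm:v-func}) is valid only for \emph{connected} covers. On the strata with \(j_{0} = -\infty\) --- the disconnected covers, which constitute the full moduli space \(\GCov[H](D)\) for the index-\(p\) subgroup \(H \subset G\) --- one has instead \(\vfunc_{V}(E) = \vfunc_{V'}(E')\) with \(V'\) the restriction of \(V\) to the stabilizer of a component, a different function; and these strata are an infinite family, not a negligible boundary, so a priori the integral could diverge there even when it converges on the connected locus. The paper closes this by induction on \(n\): it splits \(\int_{\GCov(D)} \mtvL^{d - \vfunc_{V}} = \int_{\GCov[H](D)} \mtvL^{d - \vfunc_{W}} + \int_{\GCov^{0}(D)} \mtvL^{d - \vfunc_{V}}\) with \(W = V|_{H}\), and proves the identity \(D_{V}^{(m)} = p\, D_{W}^{(m-1)}\) for \(m \geq 1\) (using the decomposition \(W \simeq W_{q+1}^{\oplus r} \oplus W_{q}^{\oplus p - r}\)), which shows that the stated inequalities for \(m = 1, \dotsc, n-1\) are precisely the convergence conditions for the \(H\)-part supplied by the induction hypothesis. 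Your proposal contains no counterpart of this step, and without it (or some other verification that the connected-locus conditions dominate the subgroup conditions) the extreme-ray computation only establishes the criterion for the connected covers, not for \(\int_{\GCov(D)} \mtvL^{d - \vfunc_{V}}\).
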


Using the wild McKay correspondence, we can study singularities of the quotient
\(V/G\); for instance, we have the following simple criterion.

\begin{Theorem}[\cref{prop:dimensional-criterion}]
  Assume that \(V\) is an effective indecomposable \(G\)-representation of
  dimension \(d\) which has no pseudo-reflection.  Let \(X = V/G\) be the
  quotient variety.  The following holds:
  \begin{enumerate}
  \item \(X\) is canonical if \(d \geq p - 1 + p^{n}\).  Furthermore, if there
    is a log resolution of \(X\), then the converse is also true.
  \item \(X\) is log canonical if and only if \(d \geq p - 1 + p^{n}\).
  \end{enumerate}
\end{Theorem}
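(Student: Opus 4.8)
The plan is to convert canonicity and log-canonicity of \(X = V/G\) into explicit inequalities on the \(\vfunc\)-function by means of the wild McKay correspondence, and then evaluate those inequalities for an indecomposable \(V\) with the help of \cref{thm:cond:converge}. Starting from \(\Mst(X) = \int_{\GCov(D)}\mtvL^{d - \vfunc_{V}} = \sum_{\vect j}[\GCov(D;\vect j)]\,\mtvL^{d - \vfunc_{V}|_{\GCov(D;\vect j)}}\), the motivic change-of-variables formula translates discrepancies of \(X\) along divisors arising from \(G\)-covers into a comparison of \(d - \vfunc_{V}\) with the dimensions of the strata \(\GCov(D;\vect j)\) (equivalently with the shift function \(\sht\)): log-canonicity of \(X\) corresponds to the requirement that every stratum contribute to \(\Mst(X)\) in \(\mtvL\)-degree at most \(d\) — whose asymptotic form is exactly the convergence condition of \cref{thm:cond:converge} — while canonicity corresponds to the sharper requirement that every \emph{nontrivial} stratum contribute in \(\mtvL\)-degree at most \(d-1\) (the analogue of the Reid--Tai inequality). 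The ``if'' halves use only the lower-bound direction of motivic integration and so need no resolution; the converse for canonicity is obtained by evaluating \(\Mst(X)\) on a log resolution and matching with the wild McKay formula, which is why part (1) carries a log-resolution hypothesis for the converse whereas part (2) does not. Here I would invoke Yasuda's results on stringy invariants of quotient varieties \cite{Yasuda2019:Discrepancies} for the precise dictionary.

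Next I would specialize. An indecomposable \(G\)-representation is a single Jordan block \(k[u]/(u^{d})\) with \(u = \sigma - 1\); effectiveness forces \(p^{n-1} < d \le p^{n}\), and the no-pseudo-reflection hypothesis excludes \(d = p^{n-1}+1\) (for that \(d\) the order-\(p\) subgroup of \(G\) acts by a pseudo-reflection, and one checks no other \(d\) in the range gives one). Insert the explicit description of \(\vfunc_{V}\) in terms of the upper ramification jumps from \cite{Tanno2020:Wild}, together with the dimensions of the Artin--Schreier--Witt strata. The values \(S_{d}^{(m)} = \sum_{a=0}^{d-1}(\text{\(m\)-th base-\(p\) digit of }a)\) admit closed forms from the base-\(p\) expansion of \(d-1\) — in particular \(\sum_{m}p^{m}S_{d}^{(m)} = \binom{d}{2}\) — and feeding these, via the \(D_{V}^{(m)}\), into the inequalities of the previous paragraph, one verifies that the whole system (the \(n\) convergence inequalities of \cref{thm:cond:converge} together with the finitely many ``small-jump'' conditions) is equivalent to the single inequality \(d \ge p - 1 + p^{n}\), the smallest-jump stratum providing the binding constraint. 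Running the same computation with the sharper canonicity bound yields the same threshold, so \(X\) is canonical when \(d \ge p - 1 + p^{n}\) and, given a log resolution, only then, and \(X\) is log canonical precisely when \(d \ge p - 1 + p^{n}\); this proves (1) and (2).

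I expect the main obstacle to be the combinatorial core: putting \(S_{d}^{(m)}\), and hence \(D_{V}^{(m)}\), into a workable closed form when \(d\) is not a power of \(p\), and then showing that the \(n\) a priori independent inequalities of \cref{thm:cond:converge} (plus the early-stratum inequalities) collapse to \(d \ge p - 1 + p^{n}\) — this requires a clean identification of the extremal stratum and a telescoping that makes the bound uniform in \(m\). A secondary, more delicate point is to set up the correspondence between the motivic integral and the birational invariants carefully enough to pin down exactly where a log resolution is and is not needed.
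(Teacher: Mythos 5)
Your overall strategy---translate (log) canonicity into the inequalities of \cref{thm:cond:converge} via the wild McKay correspondence and \cref{prop:criterion:canonical}, then evaluate them for an indecomposable \(V\)---is the paper's strategy in outline, but the proposal stops exactly where the paper's proof begins. The entire content of the argument is the computation you defer to your last paragraph: one evaluates \(S_{d}^{(m)}\), and hence \(D_{V}^{(m)}\), explicitly at the threshold dimensions, checks that for \(d = p - 1 + p^{n-1}\) the inequality with \(m = n-1\) is exactly an equality (\(1 - 1/p - (p-1)/p = 0\)) while those with \(m \leq n-2\) are strict, and then invokes a separate monotonicity statement (\cref{prop:D_V:increasing-func}: \(\sum_{l=m}^{n-1} D_{V}^{(l)}/p^{2n-1-l}\) is strictly increasing in \(d\)) to propagate the conclusion to all other \(d\); one must also dispose of the exceptional case \((p,n) = (2,3)\), where \(d = p - 1 + p^{n-1} = 1 + p^{n-1}\) forces pseudo-reflections, and quote the known results for \(n \leq 2\). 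The identity \(\sum_{m} p^{m} S_{d}^{(m)} = \binom{d}{2}\) you mention is true but does not by itself determine the individual \(S_{d}^{(m)}\), which is what the inequalities require. None of this is carried out or sketched concretely, so as it stands the proposal is a plan rather than a proof.

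There is also a substantive error in the conclusion you announce. You claim that running the computation ``with the sharper canonicity bound yields the same threshold,'' i.e.\ that the canonical and log canonical thresholds coincide. They do not: the paper proves that \(X\) is canonical if and only if \(d \geq p + p^{n-1}\) (given a log resolution), but log canonical if and only if \(d \geq p - 1 + p^{n-1}\), so at \(d = p - 1 + p^{n-1}\) the quotient is log canonical without being canonical. The discrepancy traces back to your dictionary: canonicity is governed by \emph{convergence} of \(\int_{\GCov(D)} \mtvL^{d - \vfunc_{V}}\), i.e.\ the strict inequalities, which---because there are infinitely many strata---is strictly stronger than your proposed per-stratum condition that every nontrivial stratum contribute in \(\mtvL\)-degree at most \(d-1\); log canonicity is governed by boundedness of the dimensions of the terms, i.e.\ the weak inequalities. (Separately, the threshold \(d \geq p - 1 + p^{n}\) appearing in the statement cannot be met by an indecomposable representation, whose dimension is at most \(p^{n}\); the body of the paper has \(p^{n-1}\) in its place, and your computation, once actually performed, would land on that bound.)
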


We know when given \(G\)-representation has
pseudo-reflections~\cite[Lemma~4.6]{Tanno2020:Wild}.  Note that an effective
indecomposable \(G\)-representation of dimension \(d\) has no pseudo-reflection
if and only if \(d > 1 + p^{n - 1}\).

The outline of this paper is as follows.  In \cref{sec:preliminaries}, we review
basic facts for motivic integrals, the moduli space \(\GCov(D)\), the
\(\vfunc\)-functions, and singularities.  We discuss convergence of the integral
over connected \(G\)-covers in \cref{sec:over-connected-cover}, and then apply
it to the integral \(\int_{\GCov(D)} \mtvL^{d - \vfunc}\).  As application of
the main theorem, we give some criteria for whether the quotient variety \(V/G\)
is canonical (resp.\ log canonical) or not in
\cref{sec:indecomposable-case,sec:application}.

\subsection*{Notation and convention}
Unless otherwise noted, we follow the notation below.  We denote by \(k\) an
algebraically closed field of characteristic \(p > 0\), and by \(K = k((t))\)
the field of formal Laurent power series over \(k\).  We set \(G = \langle
\sigma \rangle\) a cyclic group of order \(p^{n}\).

\subsection*{Acknowledgments}
We would like to thank Takehiko Yasuda and Takahiro Yamamoto for their helpful
comments.  This work was supported by JSPS KAKENHI JP18H01112 and JP18K18710.

\section{Preliminaries}\label{sec:preliminaries}

\subsection{Motivic integration and stringy motives}
To state the wild McKay correspondence theorem, we briefly review motivic
integration and define stringy motives.

Let \(X\) be a \(k\)-variety \(X\) of dimension \(d\).  We denote by
\(\spcJ[n] X\) the space of \(n\)-jets and by \(\spcJ X\) the space of arcs.
The motivic measure \(\mu_{X}\) on \(\spcJ X\) takes values in the ring
\(\rngMhat'\), which is a version of the completed Grothendieck ring of
varieties (see~\cite{Yasuda2014:p-Cyclic} for details).  The element of
\(\rngMhat'\) defined by \(Y\) is denoted by \([Y]\).  We write
\(\mtvL = [\schA_{k}^{1}]\).  Let \(\pi_{n} \colon \spcJ X \to \spcJ[n] X\) be
the truncation map.  We call a subset \(C \subset \spcJ X\) \emph{stable} if
there exists \(n \in \setN\) such that \(\pi_{n}(C) \subset \spcJ[n] X\) is
constructible, \(C = \pi_{n}^{-1}(\pi_{n} C)\), and the map
\(\pi_{m + 1}(C) \to \pi_{m}(C)\) is a piecewise trivial
\(\schA_{k}^{d}\)-bundle for every \(m \geq n\).  We define the measure
\(\mu_{X}(C)\) of a stable subset \(C \subset \spcJ X\) by
\(\mu_{X}(C) \coloneq [\pi_{n}(C)] \mtvL^{-nd}\) for \(n \gg 0\).  For a more
general measurable subset, we define its measure as the limit of ones of stable
subsets.  For a measurable subset \(C \subset \spcJ X\) and a function
\(F \colon C \to \setZ \cup \{\infty\}\) such that every fiber is constructible,
we define
\(\int_{C} \mtvL^{F} \coloneq \sum_{m \in \setZ} \mu_{X}(F^{-1}(m)) \mtvL^{m}\).

We assume that \(X\) is normal and its canonical sheaf \(\omega_{X}\) is
invertible.  We then define the \emph{\(\omega\)-Jacobian ideal}
\(\mathcal{J}_{X} \subset \shfO_{X}\) by
\(\mathcal{J}_{X} \omega_{X} = \Img {\left( \bigwedge^{d} \Omega_{X/k} \to
    \omega_{X}\right)}\) and the \emph{stringy motive} \(\Mst(X)\) of \(X\) by
\begin{equation}
  \Mst(X) \coloneq \int_{\spcJ X} \mtvL^{\ord \mathcal{J}_{X}}.
\end{equation}
Here \(\ord\) denotes the order function associated to an ideal sheaf.

\begin{Remark}
  In our situation where \(G = \setZ/p^{n}\setZ\) acts on \(\schA_{k}^{d}\)
  linearly, the quotient variety \(X = \schA_{k}^{d} / G\) is \(1\)-Gorenstein,
  that is, the canonical sheaf \(\omega_{X}\) is invertible (see
  \cite[{Theorem~3.1.8}]{Campbell2011:Modular}).
\end{Remark}

\subsection{The wild McKay correspondence}
Yasuda proved the following.

\begin{Theorem}[{%
  the wild McKay correspondence~\cite[Corollary~16.3]{Yasuda2019:Motivic}}]
  Let \(G\) be an arbitrary finite group.  Assume that \(G\) acts on
  \(\schA_{k}^{d}\) linearly and effectively and that \(G\) has no
  pseudo-reflection.  Then we have
  \begin{equation}
    \Mst(\schA_{k}^{d} / G) = \int_{\GCov(D)} \mtvL^{d - \vfunc}.
  \end{equation}
  Here \(\GCov(D)\) denotes the moduli space of \(G\)-covers of
  \(D = \Spec k[[t]]\), and \(\vfunc\) is the \(\vfunc\)-function
  \(\vfunc \colon \GCov(D) \to \setQ\) associated to the \(G\)-action on
  \(\schA_{k}^{d}\).
\end{Theorem}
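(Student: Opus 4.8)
This theorem is Yasuda's wild McKay correspondence~\cite[Corollary~16.3]{Yasuda2019:Motivic}, so for the purposes of the present paper we would simply invoke that reference; but let me outline how its proof goes, since the whole paper rests on the mechanism it provides. The plan is to compute the stringy motive \(\Mst(\schA_{k}^{d}/G)\) by the arc-theoretic method and to identify it, through an equivariant change of variables, with a motivic integral over the moduli space \(\GCov(D)\) of \(G\)-covers of the formal disk. In characteristic zero this is the motivic (arc-space) proof of the classical McKay correspondence; the point is to make it run in the presence of wild ramification.

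First I would reduce, up to a subset of \(\mu_{X}\)-measure zero, the arc space \(\spcJ X\) of \(X = \schA_{k}^{d}/G\) to the space of \emph{twisted arcs}. An arc \(\gamma\colon D \to X\) whose image meets the smooth locus lifts, after base change along the normalization \(E\) of \(D \times_{X} \schA_{k}^{d}\), to a \(G\)-equivariant morphism \(E \to \schA_{k}^{d}\), and \(E \to D\) is a \(G\)-cover; conversely every such pair arises this way. Hence, measure-theoretically, \(\spcJ X\) is the union over \(E \in \GCov(D)\) of the quotients by \(G\) of the spaces of \(G\)-equivariant arcs on \(\schA_{k}^{d}\) lying over \(E\). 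The hypothesis that \(G\) has no pseudo-reflection enters precisely here: it makes the quotient map \(\pi\colon \schA_{k}^{d} \to X\) étale in codimension one, so \(\omega_{X}\) is invertible, \(\pi^{*}\omega_{X} \cong \omega_{\schA_{k}^{d}}\) off a locus of codimension \(\geq 2\), and no divisorial discrepancy is lost in the reduction; it is also what lets us integrate over \(\GCov(D)\) rather than over a relative jet space of the branch locus.

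Next, for a fixed \(G\)-cover \(E = \Spec R\) with \(R\) a ramified \(k[[t]]\)-algebra, a \(G\)-equivariant arc on \(\schA_{k}^{d}\) over \(E\) is a \(G\)-equivariant \(k\)-algebra homomorphism from \(k[x_{1},\dots,x_{d}]\) to \(R\); decomposing the representation \(V\) into isotypic pieces turns the space of these into a product of twisted arc spaces. One then applies Kontsevich's change-of-variables formula to the finite map \(E \to D\) together with the equivariant arc, and the resulting Jacobian factor is controlled by the different of \(R/k[[t]]\) together with the comparison between \(\bigwedge^{d}\Omega_{\schA_{k}^{d}/k}\) and the pullback to \(E\) of the \(\omega\)-Jacobian ideal \(\mathcal{J}_{X}\). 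By definition this combination is the value \(\vfunc(E)\) of the \(\vfunc\)-function, so carrying out the (untwisted) arc integral fiberwise produces \(\mtvL^{d - \vfunc(E)}\), and integrating over the moduli space gives \(\int_{\GCov(D)} \mtvL^{d - \vfunc}\) as claimed.

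The technical heart, and the part I expect to be hardest, is making the last two steps rigorous: one needs the theory of motivic integration over Deligne--Mumford stacks (equivalently, over the moduli stack of \(G\)-covers), the correct motivic measure on \(\GCov(D)\), and tight control of the wild part of the different so that the Jacobian really packages into \(\vfunc\) — and in characteristic \(p\) the ramification filtration does not decompose multiplicatively, which is the ultimate source of the difficulty. Since all of this is established in~\cite{Yasuda2019:Motivic}, we take the statement as given.
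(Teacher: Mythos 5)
The paper does not prove this statement at all: it is quoted verbatim as Yasuda's result \cite[Corollary~16.3]{Yasuda2019:Motivic}, and your proposal correctly does the same, deferring to that reference. Your accompanying sketch of Yasuda's argument (twisted arcs, the role of the no-pseudo-reflection hypothesis, and the change of variables packaging the different into \(\vfunc\)) is a fair outline of how the cited proof goes, though note that the invertibility of \(\omega_{X}\) is not a formal consequence of the absence of pseudo-reflections --- in the paper's setting it is supplied separately by the \(1\)-Gorenstein property of wild cyclic quotients.
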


By a \(G\)-cover \(E\) of \(D\), we mean the normalization of \(D\) in an
\'{e}tale \(G\)-cover \(E^{*}\) of \(D^{*} = \Spec K\).  Note that the
\(\vfunc\)-function depends on the given \(G\)-representation.  We sometimes
write the \(\vfunc\)-function as \(\vfunc_{V}\), referring to the
\(G\)-representation \(V\) in question.

Let us consider the case \(G = \setZ/p^{n}\setZ\), which is of our principal
interest.

We can describe the moduli space \(\GCov(D)\) by using the Artin--Schreier--Witt
theory; There is a one-to-one correspondence between \(G\)-covers \(E\) and
\emph{reduced} Witt vectors \(\vect{f} = (f_{0}, f_{1}, \dotsc, f_{n - 1})\)
(\(f_{m} \in k[t^{-1}]\)).  Here \emph{reduced} means each \(f_{m}\) is of the
form \(f_{m} = \sum_{j > 0, p \nmid j} a_{m, j} t^{-j}\) (\(a_{m, j} \in k\)).
Moreover, we stratify \(\GCov(D)\) as follows:
\begin{align}
  \GCov(D) &= \coprod_{\vect{j}} \GCov(D; \vect{j}), \\
  \GCov(D; \vect{j}) &\leftrightarrow
                       \{\vect{f} = (f_{0}, f_{1}, \dotsc, f_{n - 1}) \mid
                       \ord f_{m} = - j_{m} (m = 0, 1, \dotsc, n - 1)\},
\end{align}
where \(\vect{j} = (j_{0}, j_{1}, \dotsc, j_{n - 1})\) is an \(n\)-tuple of
positive integers with \(p \nmid j_{m}\) or \(- \infty\).  Note that we can
identity \(\GCov(D; \vect{j})\) with
\(\prod_{j_{m} \neq - \infty} \schG_{k, m} \times_{k} \schA_{k}^{j_{m} - 1 %
  - \floor{j_{m}/p}}\).  See \cite[Section~2]{Tanno2020:Wild} for details.

For the definition of \(\vfunc\)-function, see
\cite[Definition~5.4]{Yasuda2016:Wilder}.  In our case
\(G = \setZ / p^{n} \setZ\), we can compute the value \(\vfunc_{V}(E)\) as
follows:

\begin{Theorem}[{\cite[Theorem~3.3]{Tanno2020:Wild}}]\label{thm:v-func}
  Let \(E\) be a connected \(G\)-cover and \(L/K\) the corresponding
  \(G\)-extension.  Assume that the \(G\)-extension \(L/K\) is defined by an
  equation \(\wp(\vect{g}) = \vect{f}\), where \(\vect{f}\) is a reduced Witt
  vector with \(- j_{m} = \ord f_{m}\) (\(j_{m} \in \setN' \cup \{- \infty\}\)).
  Note that \(j_{0} \neq - \infty\) since \(E\) is connected.  Put
  \begin{align}
    u_{i} &= \max \{p^{i - 1 - m} j_{m} \mid m = 0, 1, \dotsc, i - 1\}, \\
    l_{i} &= u_{0} + (u_{1} - u_{0})p + \dotsb + (u_{i} - u_{i - 1}) p^{i}.
  \end{align}
  Then
  \begin{equation}
    \vfunc_{V}(E)
    = \sum_{\substack{0 \leq i_{0} + pi_{1} + \dotsb + p^{n-1}i_{n-1} < d, \\
                      0 \leq i_{0} , i_{1}, \dotsc, i_{n-1} < p}} %
        \ceil{\frac{
                    i_{0}p^{n-1}l_{0} + i_{1}p^{n-2}l_{1} + \dotsb +
                    i_{n-1}l_{n-1}
                  }{p^{n}}}.
  \end{equation}
\end{Theorem}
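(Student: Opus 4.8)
The plan is to reduce to a single indecomposable summand and then compute the contribution of each basis vector by relating it to the ramification of the tower of subextensions cut out by \(\vect{f}\). Recall that \(\vfunc_{V}\) is additive for direct sums, which is clear from the lattice description of \(\vfunc\) below, so it suffices to treat an indecomposable \(G\)-representation; the general case follows by summing. Since \(k\) has characteristic \(p\) and \(G = \langle\sigma\rangle \cong \setZ/p^{n}\setZ\), the group algebra is \(k[G] = k[\tau]/(\tau^{p^{n}})\) with \(\tau = \sigma - 1\), and the indecomposable modules are exactly \(V_{d} = k[\tau]/(\tau^{d})\) for \(1 \leq d \leq p^{n}\). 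I fix such a \(V_{d}\) and take the monomial basis \(\{\tau^{r}\}_{0 \leq r < d}\); writing \(r\) in base \(p\) as \(r = i_{0} + i_{1}p + \dotsb + i_{n-1}p^{n-1}\) with \(0 \leq i_{m} < p\) identifies these \(d\) vectors with precisely the index set of the asserted sum. Thus the theorem (its indecomposable case, with \(d\) terms) amounts to showing that the \(r\)-th basis vector contributes \(\ceil{(i_{0}p^{n-1}l_{0} + \dotsb + i_{n-1}l_{n-1})/p^{n}}\) to \(\vfunc_{V_{d}}(E)\).

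First I would unwind the definition of \(\vfunc\) (\cite[Definition~5.4]{Yasuda2016:Wilder}) in this concrete setting. In the tame case it specializes to the Reid--Tai age, a sum of fractional parts of the weights; its wild counterpart is a sum, over a basis of \(V_{d}\) adapted to the \(G\)-action, of ceilings \(\ceil{w_{r}/p^{n}}\) of the \(G\)-normalized valuations \(w_{r}\) of the corresponding generators of the invariant lattice \((V_{d} \otimes_{k} \shfO_{E})^{G}\) inside \(V_{d} \otimes_{k} \shfO_{E}\). Concretely, I would filter \(V_{d} \otimes_{k} \shfO_{E}\) by the \(G\)-submodules \(\tau^{r}(V_{d} \otimes_{k} \shfO_{E})\); since \(\shfO_{E}/\shfO_{D}\) is totally (wildly) ramified of degree \(p^{n}\), the \(D\)-valuation takes values in \(\tfrac{1}{p^{n}}\setZ\), and passing from an \(\shfO_{E}\)-generator of a graded piece to an \(\shfO_{D}\)-generator of the corresponding piece of the invariant lattice rounds its valuation up to the nearest integer. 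This is the mechanism producing the ceiling \(\ceil{\,\cdot\,/p^{n}}\); it remains to identify the \(\shfO_{E}\)-valuation weight attached to \(\tau^{r}\).

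The key algebraic observation is the factorization \(\tau^{r} = \prod_{m=0}^{n-1}(\sigma^{p^{m}} - 1)^{i_{m}}\), valid because \((1+\tau)^{p^{m}} = 1 + \tau^{p^{m}}\) in characteristic \(p\), so that \(\sigma^{p^{m}} - 1 = \tau^{p^{m}}\). This splits the weight of \(\tau^{r}\) into a sum over the levels of the subgroup tower \(G \supset \langle\sigma^{p}\rangle \supset \dotsb \supset 1\): the base-\(p\) digit \(i_{m}\) records how many times the level-\(m\) generator \(\sigma^{p^{m}}-1\) enters. I would then carry out the ramification-theoretic computation showing that the \(\shfO_{E}\)-valuation weight of \(\sigma^{p^{m}}-1\) equals \(p^{n-1-m}l_{m}\). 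This uses Artin--Schreier--Witt ramification theory applied to \(\wp(\vect{g}) = \vect{f}\): the \(u_{i}\) are the upper ramification data of the subextension tower \(K = L_{0} \subset L_{1} \subset \dotsb \subset L_{n} = L\) determined by the orders \(-j_{m} = \ord f_{m}\), and \(l_{i} = \sum_{s \leq i}(u_{s}-u_{s-1})p^{s}\) repackages these breaks as the cumulative different/uniformizer datum of \(L_{i}\). Summing the per-level weights with multiplicities \(i_{m}\), and then over all \(r < d\), yields exactly the stated formula.

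I expect the main obstacle to be the third step: precisely pinning down the \(\shfO_{E}\)-valuation of the operator \(\sigma^{p^{m}} - 1\) and proving it equals \(p^{n-1-m}l_{m}\). This requires translating the explicit Artin--Schreier--Witt data (the orders \(j_{m}\) of the Witt-vector components and the Witt-vector arithmetic defining \(\wp(\vect{g}) = \vect{f}\)) into the lower and upper ramification filtrations of the tower, controlling the differents of all intermediate extensions simultaneously, and verifying that the maxima defining \(u_{i}\) are attained consistently so that the invariant-lattice descent produces a single clean ceiling per basis vector rather than a sum of separate roundings. Once this valuation identity is established, additivity over summands and the base-\(p\) bookkeeping make the remaining assembly routine.
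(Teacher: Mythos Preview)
The paper does not supply its own proof of this statement: it is quoted verbatim as \cite[Theorem~3.3]{Tanno2020:Wild} and used as a black box in the subsequent computations, so there is no in-paper argument to compare your proposal against.

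That said, your outline is a faithful sketch of how the result is actually established in the cited source. The reduction to indecomposable \(V_{d} = k[\tau]/(\tau^{d})\), the base-\(p\) factorization \(\tau^{r} = \prod_{m}(\sigma^{p^{m}}-1)^{i_{m}}\), and the interpretation of \(\vfunc_{V}(E)\) as a sum of ceilings of \(\shfO_{E}\)-valuations of generators of the invariant lattice are all the right moves. You have also correctly isolated the genuine work: identifying the valuation of \(\sigma^{p^{m}}-1\) acting on \(\shfO_{E}\) with \(p^{n-1-m} l_{m}\) via the Artin--Schreier--Witt description of the ramification filtration. One point to be careful about in a full write-up is that the lattice-theoretic definition of \(\vfunc\) requires producing an \(\shfO_{D}\)-basis of \((V_{d}\otimes\shfO_{E})^{G}\), not just filtering by powers of \(\tau\); the argument in \cite{Tanno2020:Wild} handles this by constructing explicit elements \(\tau^{r}\cdot(\text{uniformizer power})\) and checking they generate, which is where the single ceiling per basis vector (rather than iterated roundings) is justified.
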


When \(E\) is not connected with a connected component \(E'\) and the stabilizer
subgroup \(G' \subset G\), then we have
\begin{equation}
  \vfunc_{V}(E) = \vfunc_{V'}(E'),
\end{equation}
where \(V'\) is the restriction of \(V\) to \(G'\).

\begin{Remark}
  In the situation of \cref{thm:v-func}, we remark that
  \begin{enumerate}
  \item \(u_{i}\) (resp.\ \(l_{i}\)) are the upper (resp.\ lower) ramification
    jumps of the extension \(L/K\),
  \item the function \(\vfunc_{V}\) is constant on each stratum
    \(\GCov(D; \vect{j})\).
  \end{enumerate}
\end{Remark}

\subsection{Singularities}
We can study singularities of quotient varieties via the wild McKay
correspondence.

\begin{Proposition}[%
  {\cite[Proposition~6.6]{Yasuda2014:p-Cyclic}},
  {\cite[Corollary~16.4]{Yasuda2019:Motivic}}]\label{thm:Yasuda2014:Prop6.6}
  Let \(X = \schA_{k}^{d} / G\) be the quotient by a finite group \(G\).  If the
  integral \(\int_{\GCov(D)} \mtvL^{d - \vfunc}\) converges, then \(X\) is
  canonical.  Furthermore, if there is a log resolution of \(X\), then the
  converse is also true.
\end{Proposition}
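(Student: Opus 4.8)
The plan is to pass, via the wild McKay correspondence, from the integral $\int_{\GCov(D)} \mtvL^{d - \vfunc}$ to the motivic integral defining the stringy motive of $X$, and then to read off its convergence from a log resolution. Since $X = \schA_{k}^{d}/G$ is $1$-Gorenstein, $\Mst(X) = \int_{\spcJ X}\mtvL^{\ord\mathcal{J}_{X}}$ is defined, and the wild McKay correspondence identifies it with $\int_{\GCov(D)}\mtvL^{d-\vfunc}$ (for $G$ without pseudo-reflections; in general one first divides out the subgroup generated by them, as in \cite{Yasuda2019:Motivic}). It therefore suffices to show that $\Mst(X)$ converges in $\rngMhat'$ if and only if $X$ is canonical, the forward implication unconditionally and the reverse one assuming that $X$ admits a log resolution. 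I would use throughout that $K_{X}$ is Cartier, so that every discrepancy $a(v;X)$ is an integer and hence, for this $X$, the property of being canonical coincides with that of being Kawamata log terminal.

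To prove that convergence of $\Mst(X)$ implies that $X$ is canonical, I would argue by contraposition. If $X$ is not canonical it is not log terminal, so there is a divisorial valuation $v = \ord_{E}$ over $X$ with $a(v;X) \leq -1$; such a $v$ is realized by a prime divisor $E$ on some normal modification $g\colon Y' \to X$ obtained from a sequence of normalized blow-ups, so that no global log resolution is needed. I would then stratify $\spcJ X$ by the order of contact of arcs with $E$ on $Y'$ and estimate the contribution of the $m$-th contact locus to the defining series of $\Mst(X)$: its motivic measure decays like $\mtvL^{-m(\hat{k}_{E}+1)}$, where $\hat{k}_{E}\geq 0$ is the Mather discrepancy of $v$, while $\ord\mathcal{J}_{X}$ grows like $m\bigl(\hat{k}_{E} - a(v;X)\bigr)$ along it; the product thus has dimension of the order of $-m\bigl(a(v;X)+1\bigr)\geq 0$, so these terms do not tend to $0$ in $\rngMhat'$ and the series diverges.

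For the reverse implication, assume $X$ is canonical and fix a log resolution $f\colon Y \to X$ with $K_{Y/X} = \sum_{i} a_{i} E_{i}$, where $\sum_{i} E_{i}$ is simple normal crossing and, by canonicity together with the integrality of discrepancies, $a_{i}\in\setZ_{\geq 0}$. The change-of-variables formula of motivic integration, applied to $f$ and using that $X$ is Gorenstein, gives $\Mst(X) = \int_{\spcJ Y}\mtvL^{-\ord K_{Y/X}}$, and computing this integral stratum by stratum along the $E_{i}$ produces the familiar expression $\Mst(X) = \sum_{I}[E_{I}^{\circ}]\prod_{i\in I}\frac{\mtvL-1}{\mtvL^{a_{i}+1}-1}$, a finite sum over the subsets $I$ of the index set, where $E_{I}^{\circ} = \bigcap_{i\in I}E_{i}\setminus\bigcup_{j\notin I}E_{j}$. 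Each factor equals $\sum_{s\geq 1}\bigl(\mtvL^{1-(a_{i}+1)s} - \mtvL^{-(a_{i}+1)s}\bigr)$, which converges in $\rngMhat'$ because $a_{i}+1\geq 1$; hence the whole sum, and with it $\Mst(X)$, converges.

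The step I expect to be the main obstacle is the contact-locus estimate in the contrapositive argument, i.e.\ pinning down the orders of magnitude of the motivic measure of the contact loci of $E$ and of $\ord\mathcal{J}_{X}$ along them and isolating the coefficient $-\bigl(a(v;X)+1\bigr)$. This amounts to comparing the Jacobian-ideal data built into the definition of $\Mst$ with the discrepancy of $v$ — equivalently, to identifying the Mather--Jacobian discrepancy with the ordinary discrepancy for the Gorenstein variety $X$ — and it is precisely the technical content carried out in \cite[Proposition~6.6]{Yasuda2014:p-Cyclic} and \cite[Corollary~16.4]{Yasuda2019:Motivic}; the log-resolution hypothesis in the converse is what makes the clean resolution-side computation above available in positive characteristic.
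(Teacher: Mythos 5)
The paper offers no proof of this proposition at all: it is imported as a black box from \cite[Proposition~6.6]{Yasuda2014:p-Cyclic} and \cite[Corollary~16.4]{Yasuda2019:Motivic}, so there is no in-text argument to measure yours against. What you have written is, in outline, precisely the argument underlying those references: pass to \(\Mst(X)\) via the wild McKay correspondence, prove the unconditional direction by exhibiting, for non-canonical \(X\), a divisor \(E\) with \(a(E;X)\leq -1\) whose contact loci contribute terms of exponent \(-m\bigl(a(E;X)+1\bigr)\geq 0\) (using \(\ord_{E}\mathcal{J}_{X}=\hat{k}_{E}-a(E;X)\)), and prove the converse from the Batyrev-type formula \(\sum_{I}[E_{I}^{\circ}]\prod_{i\in I}(\mtvL-1)/(\mtvL^{a_{i}+1}-1)\) on a log resolution, where canonicity gives \(a_{i}\geq 0\) and each factor converges. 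This is correct as an outline, with two caveats. First, you yourself defer the contact-locus estimate --- both the exponent computation and the verification that these effective contributions cannot be cancelled in \(\rngMhat'\) --- to the cited papers; since that estimate is the entire technical content of the unconditional direction, your text remains a proof sketch, which is defensible here only because the paper likewise treats the statement as cited. Second, your reduction ``canonical \(=\) Kawamata log terminal'' rests on \(K_{X}\) being Cartier; for \(G=\setZ/p^{n}\setZ\) this is supplied by the Remark in Section~2, but the proposition is stated for an arbitrary finite \(G\), where \(X\) is in general only \(\setQ\)-Gorenstein and both this equivalence and the definition of \(\Mst(X)\) used in the paper (which requires \(\omega_{X}\) invertible) need the more general \(\omega\)-Jacobian formalism of the references. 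Neither caveat is a mathematical error, but both should be flagged if this were to stand as an actual proof rather than a reconstruction of the citation.
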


Furthermore, we can also estimate discrepancies of quotient varieties
\(X = \schA_{k}^{d} / G\) by computing the integral
\(\int_{\GCov(D)} \mtvL^{d - \vfunc}\).  See \cite{Yasuda2019:Discrepancies} for
details (see also \cite[{Section~4.4}]{Tanno2020:Wild} for the case
\(G = \setZ / p^{n} \setZ\)).

\section{Integrals over connected
  \texorpdfstring{\(G\)}{G}-covers}\label{sec:over-connected-cover}
In this section, we consider integrals over connected \(G\)-covers,
\(\int_{\GCov^{0}(D)} \mtvL^{d - \vfunc_{V}}\), where
\(\GCov^{0}(D) = \coprod_{j_{0} \neq - \infty} \GCov(D; \vect{j})\) denotes the
set of connected \(G\)-covers of the formal disk \(D = \Spec k[[t]]\).  As we
see in \cref{thm:v-func}, for a connected \(G\)-cover \(E\), the value
\(\vfunc_{V}(E)\) is determined by the upper ramification jumps of the
corresponding \(G\)-extension \(L/K\).  By abuse of notation, let us consider
\(\vfunc_{V}\) as a function in variables
\(\vect{u} = (u_{0}, u_{1}, \dotsc, u_{n - 1})\).  The following is well-known
(see, for example, \cite[Lemma~3.5]{Obus2010:Wild}).

\begin{Lemma}\label{prop:cond:upper-ram-jumps}
  Let \(\vect{u} = (u_{0}, u_{1}, \dotsc, u_{n - 1})\) be an increasing sequence
  of positive integers. Then \(\vect{u}\) occurs as the set of upper
  ramification jumps of a \(G\)-extension of \(K\) if and only if the following
  conditions hold:
  \begin{enumerate}
  \item \(p \nmid u_{0}\), and
  \item for \(1 \leq i \leq n - 1\), either
    \begin{enumerate}[label=(2.\alph*)] 
    \item \(u_{i} = p u_{i - 1}\) or
    \item both \(u_{i} > p u_{i - 1}\) and \(p \nmid u_{i}\).
    \end{enumerate}
  \end{enumerate}
\end{Lemma}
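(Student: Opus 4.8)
The plan is to pass through Artin--Schreier--Witt theory together with the ramification-jump description of \cref{thm:v-func}. Recall (\cref{sec:preliminaries}; see also \cite[Section~2]{Tanno2020:Wild}) that connected \(G\)-extensions \(L/K\) correspond bijectively to reduced Witt vectors \(\vect{f} = (f_{0}, \dotsc, f_{n-1})\) with \(f_{0} \neq 0\); setting \(j_{m} = -\ord f_{m} \in \setN \cup \{-\infty\}\), being reduced forces \(p \nmid j_{m}\) whenever \(j_{m}\) is finite. By \cref{thm:v-func} (and the remark following it) the upper ramification jumps \(\vect{u} = (u_{0}, \dotsc, u_{n-1})\) of \(L/K\) satisfy \(u_{0} = j_{0}\) and \(u_{i} = \max\{p\,u_{i-1},\, j_{i}\}\) for \(1 \le i \le n-1\); this recursion is the only input about ramification I will use.

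For the forward direction, let \(\vect{u}\) be the jumps of a \(G\)-extension, with attached reduced Witt vector as above. Then \(u_{0} = j_{0}\) and \(p \nmid j_{0}\), which is~(1). Fix \(1 \le i \le n-1\). If \(j_{i} = -\infty\) or \(j_{i} < p\,u_{i-1}\), the recursion gives \(u_{i} = p\,u_{i-1}\), i.e.\ case~(2.a). Otherwise \(j_{i} \ge p\,u_{i-1}\); since \(p \mid p\,u_{i-1}\) but \(p \nmid j_{i}\) the inequality is strict, so \(u_{i} = j_{i} > p\,u_{i-1}\) with \(p \nmid u_{i}\), i.e.\ case~(2.b). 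This proves~(2). Conversely, assume \(\vect{u}\) satisfies~(1) and~(2). Define \(j_{0} \coloneq u_{0}\); for \(1 \le i \le n-1\) set \(j_{i} \coloneq -\infty\) if~(2.a) holds at \(i\), and \(j_{i} \coloneq u_{i}\) if~(2.b) holds. By~(1) and~(2.b) every finite \(j_{m}\) is prime to \(p\), so \(\vect{f}\) with \(f_{m} = t^{-j_{m}}\) (and \(f_{m} = 0\) when \(j_{m} = -\infty\)) is a reduced Witt vector with \(f_{0} \neq 0\), hence defines a connected \(G\)-extension \(L/K\). Induction on \(i\) via the recursion shows its \(i\)-th jump equals \(u_{i}\): for \(i = 0\) this is \(u_{0} = j_{0}\); and if the \((i-1)\)-st jump is \(u_{i-1}\), the \(i\)-th is \(\max\{p\,u_{i-1},\, j_{i}\}\), which equals \(p\,u_{i-1} = u_{i}\) under~(2.a) and \(j_{i} = u_{i}\) under~(2.b). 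So \(\vect{u}\) is realized.

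The only delicate point is having available the jump recursion of \cref{thm:v-func} with the correct indexing, together with the fact that a reduced Witt vector with \(f_{0} \neq 0\) really defines a totally ramified extension of degree \(p^{n}\), so that its upper jumps are exactly \(u_{0}, \dotsc, u_{n-1}\); both are part of the Artin--Schreier--Witt package recalled in \cref{sec:preliminaries} (compare \cite[Lemma~3.5]{Obus2010:Wild}). Everything else is the elementary bookkeeping above. Note in passing that~(2) already forces \(u_{i} \ge p\,u_{i-1} > u_{i-1}\), so the hypothesis that \(\vect{u}\) be increasing is automatic once~(1) and~(2) hold; if one prefers, one can instead derive the jump recursion from scratch via the Schmid--Witt different formula, but invoking \cref{thm:v-func} is the shortest route here.
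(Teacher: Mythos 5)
Your proposal is correct, but note that the paper does not actually prove this lemma at all: it is stated as well-known with a pointer to \cite[Lemma~3.5]{Obus2010:Wild}, so there is no in-paper argument to compare against. What you supply is a self-contained derivation from the Artin--Schreier--Witt description already set up in the preliminaries, resting on the jump recursion \(u_{0} = j_{0}\), \(u_{i} = \max\{p\,u_{i-1}, j_{i}\}\) for a reduced Witt vector with \(j_{m} = -\ord f_{m}\); granting that recursion and the fact that \(f_{0} \neq 0\) characterizes connected covers (both part of the standard ASW package the paper invokes), your case analysis in the forward direction and your explicit construction \(f_{m} = t^{-j_{m}}\) in the converse are both airtight, and your observation that (1) and (2) already force \(\vect{u}\) to be increasing is a nice bonus. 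The one point to be careful about is that the formula for \(u_{i}\) as displayed in \cref{thm:v-func} reads \(u_{i} = \max\{p^{i-1-m} j_{m} \mid m = 0, \dotsc, i-1\}\), which as literally written omits \(j_{i}\) and is vacuous for \(i = 0\); this is evidently an indexing slip (the intended formula is \(u_{i} = \max\{p^{i-m} j_{m} \mid m = 0, \dotsc, i\}\), which is equivalent to your recursion), and you rightly flag that you are using ``the correct indexing,'' but if this were to be included in the paper that discrepancy should be resolved explicitly rather than silently. In exchange for this dependence on the ASW jump formula, your route is shorter than re-deriving the jumps from the Schmid--Witt different computation, and it keeps the argument entirely within the machinery the paper already uses in \cref{sec:over-connected-cover} to parametrize \(\setJ(\vect{u})\).
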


We denote by \(\setUrj\) the set of increasing sequences of positive integers
satisfying the conditions of \cref{prop:cond:upper-ram-jumps}.  For \(\vect{u} =
(u_{0}, u_{1}, \dotsc, u_{n - 1})\), set
\begin{equation}
  \setJ(\vect{u}) \coloneq \left\{ \vect{j} = (j_{0}, j_{1}, \dotsc, j_{n - 1})
    \mathrel{} \middle| \mathrel{}
    u_{m} = \max\{ p^{m - 1 - i} j_{i} \mid i = 0, 1, \dotsc, m - 1 \} \right\}.
\end{equation}
Then we obtain
\begin{equation}\label{eq:integral/connected-covers}
  \int_{\GCov^{0}(D)} \mtvL^{d - \vfunc_{V}}
  = \sum_{\vect{u} \in \setUrj} \left(
      \sum_{\vect{j} \in \setJ(\vect{u})} [\GCov(D; \vect{j})] \right)
      \mtvL^{d - \vfunc_{V}(\vect{u})}.
\end{equation}
In addition, by definition, we have
\begin{equation}
  \dim \sum_{\vect{j} \in \setJ(\vect{j})} [\GCov(D; \vect{j})]
  = d + u_{0} - \floor{u_{0}/p} + u_{1} - \floor{u_{1}/p} + \dotsb
    + u_{n - 1} - \floor{u_{n - 1} / p}.
\end{equation}
Therefore, it is enough to study the asymptotic behavior of the function
\begin{equation}\label{cond:pre:criterion}
  u_{0} - \floor{u_{0}/p} + u_{1} - \floor{u_{1}/p} + \dotsb
  + u_{n - 1} - \floor{u_{n - 1} / p}
  - \vfunc_{V}(u_{0}, u_{1}, \dotsc, u_{n - 1})
\end{equation}
in variables \(u_{0}, u_{1}, \dotsc, u_{n - 1}\).

Let us define some invariants to study the function \(\vfunc_{V}\).

\begin{Definition}
  For a positive integer \(d\) (\(d \leq p^{n}\)) and a non-negative integer
  \(m\) (\(m \leq n - 1\)), we define
  \begin{equation}
    S_{d}^{(m)} \coloneq \sum_{\substack{0 \leq i_{0} + i_{1} p + \dotsb i_{n -
          1} p^{n - 1} < d, \\ 0 \leq i_{0}, i_{1}, \dotsc, i_{n - 1} < p}}
    i_{m}.
  \end{equation}
\end{Definition}

Namely, \(S_{d}^{(m)}\) is the sum of the \((m + 1)\)-th digits of the integers
\(0, 1, \dotsc, d - 1\) in base-\(p\) notation.  We can write them explicitly.

\begin{Lemma}
  Put \(d = d_{0} + d_{1} p + \dotsb + d_{n - 1} p^{n - 1}\)
  (\(0 \leq d_{m} < p\) for \(m = 0, 1, \dotsc, n - 2\); \(0 \leq d_{n - 1}\)).
  Then the equality
  \begin{equation}
    S_{d}^{(m)} = p^{m} S_{d_{m} + \dotsb + d_{n - 1} p^{n - 1 - m}}^{(0)}
                  + \sum_{l = 0}^{m - 1} p^{l} d_{l} d_{m} \quad (m > 0)
  \end{equation}
  holds.  In addition, we have
  \begin{equation}
    S_{d}^{(0)} = (d_{1} + d_{2} p + \dotsb d_{n - 1} p^{n - 2})
      \frac{p (p - 1)}{2} + \frac{d_{0} (d_{0} - 1)}{2}.
  \end{equation}
\end{Lemma}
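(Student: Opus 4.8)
The plan is to prove both identities by an elementary count of base-\(p\) digits, splitting the set \(\{0, 1, \dotsc, d - 1\}\) according to the values of the higher-order digits; no induction is really needed. (One could alternatively expand \(i_m = \floor{i/p^m} - p\floor{i/p^{m+1}}\) and apply the standard evaluation of \(\sum_{i=0}^{d-1}\floor{i/q}\), but the digit-grouping argument below is cleaner and yields the stated shape directly.)

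First I would treat \(m = 0\). Here I group the integers \(i \in \{0, 1, \dotsc, d - 1\}\) by the value of \(\floor{i/p}\). For each \(k\) with \(0 \le k \le \floor{d/p} - 1\), the full block \(\{kp, kp + 1, \dotsc, kp + p - 1\}\) lies in range and its last digits run over \(0, 1, \dotsc, p - 1\), contributing \(\frac{p(p-1)}{2}\); since \(\floor{d/p} = d_1 + d_2 p + \dotsb + d_{n-1}p^{n-2}\), these blocks account for the first summand. The remaining integers are exactly those with \(\floor{i/p} = \floor{d/p}\), i.e. those with last digit \(0, 1, \dotsc, d_0 - 1\), contributing \(\frac{d_0(d_0 - 1)}{2}\). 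Adding gives the stated formula for \(S_d^{(0)}\).

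For \(m > 0\) I would write each \(i \in \{0, \dotsc, d - 1\}\) as \(i = a + p^m b\) with \(0 \le a < p^m\), so that \(a = i_0 + i_1 p + \dotsb + i_{m-1}p^{m-1}\) carries the low digits and \(b = \floor{i/p^m}\) the high ones; in particular \(i_m\) is the last base-\(p\) digit of \(b\). Writing \(d = \delta + p^m\Delta\) with \(\delta = d_0 + \dotsb + d_{m-1}p^{m-1} < p^m\) and \(\Delta = \floor{d/p^m} = d_m + d_{m+1}p + \dotsb + d_{n-1}p^{n-1-m}\), the inequality \(i < d < p^m(\Delta + 1)\) forces \(b \le \Delta\), and I split into: (i) \(b < \Delta\), where \(a\) ranges over all of \(\{0, \dotsc, p^m - 1\}\), so each value of \(b\) is attained exactly \(p^m\) times and the contribution is \(p^m \sum_{b=0}^{\Delta - 1}(b \bmod p) = p^m S_{\Delta}^{(0)}\) by the very definition of \(S^{(0)}\); (ii) \(b = \Delta\), where \(a\) must satisfy \(0 \le a < \delta\) to keep \(i < d\), giving \(\delta\) integers, each with \(i_m = \Delta \bmod p = d_m\), hence contribution \(\delta d_m = \sum_{l=0}^{m-1} p^l d_l d_m\). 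Summing (i) and (ii) yields the asserted identity. I do not anticipate a genuine obstacle; the only points needing a moment's care are the bookkeeping in case (ii) — that the integers with \(\floor{i/p^m} = \Delta\) and \(i < d\) are precisely those with low part \(a \in \{0, \dotsc, \delta - 1\}\) — and the remark that the symbol \(S_{\Delta}^{(0)}\) occurring in case (i) makes sense even though \(\Delta\) need not be \(< p^n\), since its defining sum (sum of last digits of \(0, \dotsc, \Delta - 1\)) and the \(m = 0\) formula just established both extend verbatim.
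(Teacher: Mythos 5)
Your proof is correct. It rests on the same digit-counting idea as the paper's, but you telescope it: the paper peels off only the lowest digit, obtaining the recursion \(S_{d}^{(m)} = p S_{q}^{(m-1)} + d_{0} d_{m}\) with \(q = \floor{d/p}\) and then iterating by induction on \(m\), whereas you split \(i = a + p^{m} b\) in one step and read off both summands of the stated identity directly, with no induction. The two decompositions agree after unrolling the paper's recursion \(m\) times, so nothing essentially new is happening, but your version has the minor advantages of being a single closed count and of actually writing out the \(m = 0\) computation (the block-of-\(p\) grouping) that the paper dismisses as obvious; your closing remark that \(S_{\Delta}^{(0)}\) still makes sense for the truncated integer \(\Delta = d_{m} + \dotsb + d_{n-1}p^{n-1-m}\) is the right point to flag and is handled adequately.
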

\begin{proof}
  The second equality is obvious.  Let
  \(q = d_{1} + d_{2} p + \dotsb + d_{n - 1} p^{n - 2}\).  By definition, we
  have
  \begin{align}
    \label{eq:S_d^(m):induction}
    S_{d}^{(m)} =
    \sum_{\substack{
            0 \leq i_{0} + i_{1} p + \dotsb i_{n - 1} p^{n - 1} < d, \\
            0 \leq i_{0}, i_{1}, \dotsc, i_{n - 1} < p}} i_{m}
    &= \sum_{i_{0} = 0}^{p - 1}
      \sum_{\substack{
              0 \leq i_{1} + i_{2} p + \dotsb i_{n - 1} p^{n - 2} < q, \\
              0 \leq i_{1}, i_{1}, \dotsc, i_{n - 1} < p}} i_{m}
    + \sum_{i_{0} = 0}^{d_{0} - 1} d_{m} \\
    &= p \sum_{\substack{
                0 \leq i_{0} + i_{1} p + \dotsb i_{n - 2} p^{n - 2} < q, \\
                0 \leq i_{0}, i_{1}, \dotsc, i_{n - 2} < p}} i_{m - 1}
      + d_{0} d_{m} \\
    &= p S_{q}^{(m - 1)} + d_{0} d_{m}.
  \end{align}
  Hence we obtain the first equality by induction.
\end{proof}

\begin{Definition}
  Let \(V\) be an indecomposable \(G\)-representation of dimension \(d\).  We
  define
  \begin{equation}
    D_{V}^{(m)} \coloneq p^{n - 1} {\left( S_{d}^{(m)}
        - (p - 1) \sum_{l = m + 1}^{n - 1} p^{m - l} S_{d}^{(l)} \right)}.
  \end{equation}
  For decomposable \(G\)-representations, we define the invariants
  \(D_{V}^{(m)}\) in the way that they become additive for direct sums.
\end{Definition}

\begin{Lemma}\label{prop:reminder-vfunc}
  For integers \(q_{m}\) and \(r_{m}\) (\(m = 0, 1, \dotsc, n - 1\)), we have
  \begin{equation}
    \vfunc_{V}(q_{0} p^{n} + r_{0}, q_{1} p^{n} + r_{1}, \dotsc,
      q_{n - 1} p^{n} + r_{n -1})
    = \sum_{m = 0}^{n - 1} D_{V}^{(m)} q_{m}
      + \vfunc_{V}(r_{0}, r_{1}, \dots, r_{n - 1}).
  \end{equation}
\end{Lemma}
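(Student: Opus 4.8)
The plan is to reduce to an indecomposable $V$ and then exploit that, viewed through the formula of \cref{thm:v-func} as a function of $\vect{u} = (u_{0}, \dots, u_{n-1}) \in \setZ^{n}$, $\vfunc_{V}$ is a sum of ceilings whose arguments are \emph{linear} in $\vect{u}$; shifting a coordinate by a multiple of $p^{n}$ then changes each such argument by an integer multiple of $p^{n}$, so the ceilings absorb the shift additively. For the reduction, note that both sides of the claimed identity are additive under direct sums of $G$-representations --- the right-hand side because the $D_{V}^{(m)}$ are defined that way, and the left-hand side because $\vfunc_{V}$ is additive under direct sums (immediate from \cite[Definition~5.4]{Yasuda2016:Wilder}) --- so it suffices to prove the lemma when $V$ is indecomposable of dimension $d$, in which case $\vfunc_{V}(\vect{u})$ is given by the formula of \cref{thm:v-func} for every $\vect{u} \in \setZ^{n}$.

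Next I would make the linear dependence explicit. Telescoping $l_{i} = u_{0} + (u_{1} - u_{0})p + \dots + (u_{i} - u_{i-1})p^{i}$ gives $l_{i} = p^{i} u_{i} - (p-1)\sum_{j=0}^{i-1} p^{j} u_{j}$, a linear form in $u_{0}, \dots, u_{i}$ with zero constant term. Hence, for each admissible $\vect{i} = (i_{0}, \dots, i_{n-1})$ --- meaning $0 \le i_{0} + i_{1}p + \dots + i_{n-1}p^{n-1} < d$ and $0 \le i_{s} < p$ --- the numerator $A_{\vect{i}}(\vect{u}) \coloneq \sum_{s=0}^{n-1} i_{s} p^{n-1-s} l_{s}$ is linear in $\vect{u}$, and collecting terms one finds that the coefficient of $u_{m}$ in $A_{\vect{i}}(\vect{u})$ equals
\[
  c_{m}(\vect{i}) = i_{m} p^{n-1} - (p-1)\sum_{s=m+1}^{n-1} i_{s}\, p^{n-1-(s-m)},
\]
which is an integer because every exponent $n-1-(s-m)$ occurring here lies in $\{0, 1, \dots, n-2\}$.

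Finally, substituting $u_{m} = q_{m}p^{n} + r_{m}$ and using linearity of $A_{\vect{i}}$, one has $A_{\vect{i}}(q_{0}p^{n} + r_{0}, \dots, q_{n-1}p^{n} + r_{n-1}) = A_{\vect{i}}(r_{0}, \dots, r_{n-1}) + p^{n}\sum_{m=0}^{n-1} c_{m}(\vect{i})\, q_{m}$; dividing by $p^{n}$ and invoking $\ceil{x+k} = \ceil{x} + k$ with the integer $k = \sum_{m=0}^{n-1} c_{m}(\vect{i})\, q_{m}$ shows that the $\vect{i}$-th ceiling term of $\vfunc_{V}$ increases by exactly $\sum_{m=0}^{n-1} c_{m}(\vect{i})\, q_{m}$. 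Summing over all admissible $\vect{i}$, and using $\sum_{\vect{i}} i_{s} = S_{d}^{(s)}$ together with $p^{n-1-(s-m)} = p^{n-1} p^{m-s}$, the total increase is
\[
  \sum_{m=0}^{n-1} q_{m}\, p^{n-1}\Bigl(S_{d}^{(m)} - (p-1)\sum_{s=m+1}^{n-1} p^{m-s} S_{d}^{(s)}\Bigr)
  = \sum_{m=0}^{n-1} D_{V}^{(m)} q_{m},
\]
which is exactly the asserted identity. The only point that really needs care is the integrality of the slope $c_{m}(\vect{i})$ --- without it the ceiling would not distribute over the shift --- and this is immediate from the shape of the formula; the rest is routine bookkeeping (one could equally shift a single coordinate at a time and induct on $\sum_{m} |q_{m}|$).
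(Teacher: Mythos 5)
Your proposal is correct and follows essentially the same route as the paper's proof: reduce to the indecomposable case by additivity, observe that the arguments of the ceilings in \cref{thm:v-func} are linear in \(\vect{u}\) so that the shift by \(q_{m}p^{n}\) contributes an integer that the ceiling absorbs, and then collect the coefficients of the \(q_{m}\) to recognize \(D_{V}^{(m)}\). The only difference is organizational --- you telescope \(l_{i}\) first and isolate the integer slope \(c_{m}(\vect{i})\) explicitly, while the paper substitutes into the \(l_{i}\) and rearranges afterwards --- and your explicit check that \(c_{m}(\vect{i})\in\setZ\) is a welcome clarification of the step the paper leaves implicit.
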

\begin{proof}
  Since the function \(\vfunc_{V}\) and the invariants \(D_{V}^{(m)}\) are
  additive with respect to direct sums of \(G\)-representations, we may assume
  that \(V\) is indecomposable of dimension \(d\).

  By direct computing with \cref{thm:v-func}, we obtain
  \begin{align}
    & \vfunc_{V}(q_{0} p^{n} + r_{0}, u_{1} p^{n} + r_{1}, \dotsc,
      q_{n - 1} p^{n} + r_{n - 1}) \\
    &= \sum_{\substack{
      0 \leq i_{0} + i_{2} p + \dotsb i_{n - 1} p^{n - 1} < d, \\
      0 \leq i_{0}, i_{1}, \dotsc, i_{n - 1} < p}}
      \sum_{m = 0}^{n - 1} i_{m} p^{n - 1 - m}
      (q_{0} + (q_{1} - q_{0}) p + \dotsb + (q_{m} - q_{m - 1}) p^{m}) \\
    &\qquad\qquad + \vfunc_{V}(r_{0}, r_{1}, \dotsc, r_{n - 1}).
  \end{align}
  Moreover, we have
  \begin{align}
    & \sum_{m = 0}^{n - 1} i_{m} p^{n - 1 - m}
      (q_{0} + (q_{1} - q_{0}) p + \dotsb + (q_{m} - q_{m - 1}) p^{m}) \\
    =& p^{n - 1} i_{0} q_{0}
       + p^{n - 2} i_{1} (- (p - 1) q_{0} + p q_{1}) + \dotsb \\
    &+ i_{n - 1} (- (p - 1) q_{0} - p (p - 1) q_{1} - \dotsb
      - p^{n -2} (p - 1) q_{n - 2} + p^{n - 1} q_{n - 1}) \\
    =& p^{n - 1} (i_{0} - (p - 1)
      (p^{-1} i_{1} + \dotsb + p^{- n + 1} i_{n - 1} )) q_{0} \\
    &+ p^{n - 1} (i_{1} - (p - 1)
      (p^{-1} i_{2} + \dotsb + p^{- n + 2} i_{n - 1})) q_{1} + \dotsb \\
    &+ p^{n - 1} i_{n - 1} q_{n - 1}.
  \end{align}
  The above equalities together with the definition of \(D_{V}^{(m)}\) show the
  lemma.
\end{proof}

We state the following as a conclusion of this section.

\begin{Theorem}\label{prop:integral/connected-covers}
  Let \(V\) be a \(G\)-representation of dimension \(d\) (not necessarily
  indecomposable).  The integral \(\int_{\GCov^{0}(D)} \mtvL^{d - \vfunc_{V}}\)
  on the space \(\GCov^{0}(D)\) of the connected \(G\)-covers converges if and
  only if the strict inequalities
  \begin{equation}
    1 - \frac{1}{p^{n - m}}
    - \sum_{l = m}^{n - 1} \frac{D_{V}^{(l)}}{p^{2 n - 1 -l}} < 0
    \quad (m = 0, 1, \dots, n  -1)
  \end{equation}
  hold.  If the inequalities \(\leq 0\) hold, then the integral
  \(\int_{\GCov^{0}(D)} \mtvL^{d - \vfunc}\) has terms of dimension bounded
  above.
\end{Theorem}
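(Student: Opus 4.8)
The plan is to turn the convergence question into a statement about the asymptotics of one function on a rational polyhedral cone. By the expansion \eqref{eq:integral/connected-covers} together with the dimension count displayed just above \eqref{cond:pre:criterion}, the integral $\int_{\GCov^{0}(D)}\mtvL^{d-\vfunc_{V}}$ equals, in $\rngMhat'$, a series with effective terms indexed by $\vect{u}\in\setUrj$, the $\vect{u}$-th of which has dimension equal to the quantity \eqref{cond:pre:criterion} --- call it $\Phi(\vect{u})$ --- up to an additive constant independent of $\vect{u}$ (each $\setJ(\vect{u})$ being finite, the regrouping is harmless). Hence the integral converges if and only if $\Phi(\vect{u})\to-\infty$ as $\vect{u}$ runs to infinity in $\setUrj$, i.e.\ $\#\{\vect{u}\in\setUrj:\Phi(\vect{u})\ge-N\}<\infty$ for every $N$; and its terms have bounded dimension if and only if $\Phi$ is bounded above on $\setUrj$. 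So I would work throughout with $\Phi|_{\setUrj}$.

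First I would replace $\Phi$ by a linear form. Writing $u_{m}=p^{n}q_{m}+r_{m}$ with $0\le r_{m}<p^{n}$, one has $u_{m}-\floor{u_{m}/p}=(p-1)p^{n-1}q_{m}+(r_{m}-\floor{r_{m}/p})$, while \cref{prop:reminder-vfunc} gives $\vfunc_{V}(\vect{u})=\sum_{m}D_{V}^{(m)}q_{m}+\vfunc_{V}(r_{0},\dots,r_{n-1})$. Subtracting, $\Phi(\vect{u})=\sum_{m}c_{m}q_{m}+\Phi(r_{0},\dots,r_{n-1})$ with $c_{m}\coloneq(p-1)p^{n-1}-D_{V}^{(m)}$; since the residue tuple $(r_{0},\dots,r_{n-1})$ takes only finitely many values, this shows $\Phi\modBdd\ell$ on $\setZ_{\ge1}^{n}$, where $\ell(\vect{u})\coloneq p^{-n}\sum_{m}c_{m}u_{m}$ is linear. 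Consequently $\Phi$ tends to $-\infty$ (resp.\ is bounded above) on $\setUrj$ exactly when $\ell$ does.

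Next I would bring in the geometry. By \cref{prop:cond:upper-ram-jumps} every $\vect{u}\in\setUrj$ satisfies $u_{0}\ge1$ and $u_{i}\ge pu_{i-1}$, so $\setUrj$ lies in the simplicial --- hence pointed --- rational cone $\mathcal{C}\coloneq\{\vect{u}\in\setR^{n}:u_{0}\ge0,\ u_{i}-pu_{i-1}\ge0\ (1\le i\le n-1)\}$, whose $n$ extremal rays are spanned by $\rho_{m}=(0,\dots,0,1,p,\dots,p^{n-1-m})$ (with the $1$ in slot $m$), $m=0,\dots,n-1$. A short computation with the geometric series $\sum_{l=m}^{n-1}p^{l-m}=(p^{n-m}-1)/(p-1)$ and the definition of $c_{m}$ gives \[\ell(\rho_{m})=p^{n-m-1}\Bigl(1-\tfrac{1}{p^{n-m}}-\sum_{l=m}^{n-1}\tfrac{D_{V}^{(l)}}{p^{2n-1-l}}\Bigr),\] so the inequalities in the statement are precisely the conditions $\ell(\rho_{m})<0$ for all $m$, and their non-strict versions are $\ell(\rho_{m})\le0$ for all $m$. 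The sufficiency directions then follow from a standard cone argument: if $\ell(\rho_{m})<0$ for every $m$, then ($\mathcal{C}$ being pointed) $\ell(\vect{v})\le-\varepsilon\|\vect{v}\|$ on $\mathcal{C}$ for some $\varepsilon>0$, whence $\Phi(\vect{u})\to-\infty$ on $\setUrj\subseteq\mathcal{C}$ and the integral converges; if $\ell(\rho_{m})\le0$ for every $m$, then $\ell\le0$ on all of $\mathcal{C}$ (being a nonnegative combination of the $\rho_{m}$), so $\Phi$ is bounded above on $\setUrj$, which yields the last assertion.

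It remains to prove necessity, and this is the step I expect to require the most care. For each $m$ I would take $u_{i}=p^{i}$ ($i<m$), $u_{m}^{(k)}=a+kp^{n}$ with $p\nmid a$ and $a>p^{m}$, and $u_{m+i}^{(k)}=p^{i}u_{m}^{(k)}$; one checks, using \cref{prop:cond:upper-ram-jumps}, that $\vect{u}^{(k)}\in\setUrj$, that $\vect{u}^{(k)}=\vect{u}^{(0)}+kp^{n}\rho_{m}$, and that the residues of $\vect{u}^{(k)}$ modulo $p^{n}$ do not depend on $k$, so that $\Phi(\vect{u}^{(k)})=kp^{n}\ell(\rho_{m})+\Phi(\vect{u}^{(0)})$; this sequence forces divergence (resp.\ unboundedness of the dimensions) whenever some $\ell(\rho_{m})\ge0$ (resp.\ $>0$). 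The delicate part is precisely this two-sided comparison between the arithmetically constrained set $\setUrj$ and the cone $\mathcal{C}$: the inclusion $\setUrj\subseteq\mathcal{C}$ makes sufficiency immediate, whereas for necessity one must ensure that the approach to each $\rho_{m}$ can be realised inside $\setUrj$ --- respecting $p\nmid u_{0}$ and the jump conditions --- along a sequence with fixed residues, so that the bounded discrepancy $\Phi-\ell$ cannot interfere. Identifying $\ell(\rho_{m})$ with the displayed linear forms is routine but requires the exponent bookkeeping indicated above.
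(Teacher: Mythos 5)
Your proposal is correct and follows essentially the same route as the paper: both reduce the question, via \cref{prop:reminder-vfunc}, to the asymptotics of the linear form $\sum_{m}\bigl(1-\tfrac{1}{p}-\tfrac{D_{V}^{(m)}}{p^{n}}\bigr)u_{m}$ on the simplicial cone cut out by $u_{0}\ge 1$, $u_{i}\ge pu_{i-1}$, and both identify the stated inequalities as the values of that form along the extremal directions (your $\ell(\rho_{m})$ is, up to the positive factor $p^{n-m-1}$, the paper's coefficient of $t$ at the $m$-th vertex of the slice $u_{n-1}=t$). Your explicit sequences $\vect{u}^{(k)}\in\setUrj$ for the necessity direction make precise a point the paper leaves implicit, namely that the extremal directions are actually approached within the arithmetically constrained set $\setUrj$.
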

\begin{proof}
  It is obvious that the integral \(\int_{\GCov^{0}(D)} \mtvL^{d - \vfunc_{V}}\)
  converges if and only if \cref{cond:pre:criterion} tends to \(- \infty\) as
  the all variables \(u_{m}\) increase.

  From \cref{prop:reminder-vfunc}, we have
  \begin{align}
    & u_{0} - \floor{u_{0}/p} + u_{1} - \floor{u_{1}/p} + \dotsb
      + u_{n - 1} - \floor{u_{n - 1} / p}
      - \vfunc_{V}(u_{0}, u_{1}, \dotsc, u_{n - 1}) \\
    &\modBdd
      u_{0} - u_{0}/p + u_{1} - u_{1}/p + \dotsb + u_{n - 1} - u_{n - 1}/p
      - \sum_{m = 0}^{n - 1} D_{V}^{(m)} u_{m}/p^{n} \\
    &=
      \sum_{m = 0}^{n - 1} \left(
      1 - \frac{1}{p} - \frac{D_{V}^{(m)}}{p^{n}}\right) u_{m}
      \eqqcolon f(\vect{u}),
  \end{align}
  where \(\modBdd\) means equivalence modulo bounded functions.  What we want to
  study is the limit of the function \(f(\vect{u})\).  Thus we consider
  \(f(\vect{u})\) as a function defined on
  \begin{equation}
    \setUtilde \coloneq \{
    \vect{u} = (u_{0}, u_{1}, \dotsc, u_{n - 1}) \in \setR^{n}
    \mid u_{0} \geq 1, u_{i} \geq p u_{i - 1} \, (i = 1, 2, \dotsc, n - 1)\}
  \end{equation}
  instead of \(\setUrj\).  For \(t \in \setR_{\geq 1}\), let
  \(\setUtilde_{t} \coloneq \setUtilde \cap \{u_{n - 1} = t\}\) be the
  intersection of the polyhedron \(\setUtilde\) and the hyperplane
  \(u_{n - 1} = t\).  Assume \(t \geq p^{n - 1}\) so that \(\setUtilde_{t}\)
  becomes non-empty.  Obviously, \(\setUtilde_{t}\) is bounded, that is, it is a
  polytope.  Since \(f\) is a linear function, thus the maximum value of
  \(f|_{\setUtilde_{t}}\) is attained at the one of its vertices
  \begin{equation}
    (1, p \dotsc, p^{n - 2}, t),
    (1, p \dotsc, p^{n - 3}, t/p, t), \dotsc,
    (t/p^{n - 1}, \dotsc, t/p, t) \in \setUtilde_{t}.
  \end{equation}
  By substituting, we have
  \begin{align}
    f(\overbrace{1, p \dotsc, p^{m - 1}}^{m}, t/p^{n - 1 -m}, \dotsc, t/p, t)
    &\modBdd \sum_{l = m}^{n - 1} \left(1 - \frac{1}{p} -
      \frac{D_{V}^{(l)}}{p^{n}}\right) \frac{t}{p^{n - 1 - l}} \\
    &= \left(
      1 - \frac{1}{p^{n - m}}
      - \sum_{l = m}^{n - 1} \frac{D_{V}^{(l)}}{p^{2n - 1 - l}}\right) t.
  \end{align}

  Therefore, this shows that the function \(f(\vect{u})\) tends to \(- \infty\)
  if and only if the coefficients
  \(1 - 1/p^{n - m} - \sum_{l = m}^{n - 1} D_{V}^{(l)}/p^{2n - 1 - l}\) are all
  negative.
\end{proof}

\section{Convergence of stringy motives}\label{sec:convergence}
In this section, we prove our main theorem.

\begin{Theorem}\label{thm:cond:converge}
  Let \(V\) be an effective \(G\)-representation of dimension \(d\).  The
  integral \(\int_{\GCov(D)} \mtvL^{d - \vfunc_{V}}\) converges if and only if
  the strict inequalities
  \begin{equation}
    1 - \frac{1}{p^{n - m}}
    - \sum_{l = m}^{n - 1} \frac{D_{V}^{(l)}}{p^{2n - 1 - l}} < 0
    \quad (m = 0, 1, \dotsc, n - 1)
  \end{equation}
  hold.  If the inequalities \(\leq 0\) hold, then the integral
  \(\int_{\GCov(D)} \mtvL^{d - \vfunc}\) has terms of dimension bounded above.
\end{Theorem}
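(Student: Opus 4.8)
The plan is to deduce this from \cref{prop:integral/connected-covers} by splitting \(\GCov(D)\) according to the stabilizer of a connected component. Since \(G=\langle\sigma\rangle\) is cyclic of order \(p^{n}\), its only subgroups are \(G_{s}=\langle\sigma^{p^{s}}\rangle\cong\setZ/p^{n-s}\setZ\) for \(s=0,1,\dots,n\), and they are totally ordered. In the Artin--Schreier--Witt description a \(G\)-cover \(E\leftrightarrow\vect f=(f_{0},\dots,f_{n-1})\); one checks (from the Witt-vector computation of the order of \(\vect f\), or from \cref{thm:v-func}) that \(E\) has component-stabilizer \(G_{s}\) precisely when \(f_{0}=\dots=f_{s-1}=0\) and \(f_{s}\neq0\), and that deleting the leading zeros identifies that locus --- together with its stratification by pole orders, hence its motivic class --- with \(\GCov[G_{s}]^{0}(D)\), the space of connected \(G_{s}\)-covers, parametrised by the reduced length-\((n-s)\) Witt vector \((f_{s},\dots,f_{n-1})\); on it \(\vfunc_{V}=\vfunc_{V_{s}}\) with \(V_{s}\coloneq\operatorname{Res}_{G_{s}}V\), which is again \(d\)-dimensional. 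Hence
\[
  \int_{\GCov(D)}\mtvL^{d-\vfunc_{V}}
    =\sum_{s=0}^{n}\int_{\GCov[G_{s}]^{0}(D)}\mtvL^{d-\vfunc_{V_{s}}},
\]
the \(s=n\) summand being the single bounded class \(\mtvL^{d}\). As the strata in distinct summands are disjoint and keep their dimension, the left-hand side converges (resp.\ has terms of dimension bounded above) if and only if every summand does.

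By \cref{prop:integral/connected-covers}, applied to the cyclic group \(G_{s}\cong\setZ/p^{n-s}\setZ\) and its representation \(V_{s}\), the \(s\)-th summand (\(0\le s\le n-1\)) converges if and only if
\[
  1-\frac{1}{p^{(n-s)-m}}-\sum_{l=m}^{(n-s)-1}\frac{D_{V_{s}}^{(l)}}{p^{2(n-s)-1-l}}<0
  \qquad(m=0,1,\dots,n-s-1),
\]
and it has terms of bounded-above dimension whenever all these expressions are \(\le0\); the \(s=n\) summand imposes nothing. Thus the theorem will follow once we check that the \(m\)-th inequality here for \((G_{s},V_{s})\) is literally the \((m+s)\)-th inequality in the statement: the union over \(s=0,\dots,n\) then reduces to the list in the statement, since already \(s=0\) produces all of it, and likewise for the \(\le0\) version.

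To see this it suffices, by additivity of \(\vfunc_{V}\), of the \(D_{V}^{(l)}\), and of restriction under direct sums, to treat an indecomposable \(V\) of dimension \(d\) and the subgroup \(G_{1}\) of index \(p\); general \(G_{s}\) then follows by iterating along the chain \(G\supset G_{1}\supset\cdots\), using transitivity of restriction. In characteristic \(p\) one has \(\sigma^{p}-1=(\sigma-1)^{p}\), whence \(\operatorname{Res}_{G_{1}}V\cong\bigoplus_{i=0}^{p-1}W_{\ceil{(d-i)/p}}\) with \(W_{c}\) the \(c\)-dimensional indecomposable \(G_{1}\)-representation; on the combinatorial side this mirrors the digit identity \(S_{d}^{(m)}=\sum_{i=0}^{p-1}S_{\ceil{(d-i)/p}}^{(m-1)}\) for \(m\ge1\) (split each of \(0,1,\dots,d-1\) as \(i+pk'\) with \(0\le i<p\)). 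Substituting these into the definition of the \(D\)-invariants yields \(D_{\operatorname{Res}_{G_{1}}V}^{(l)}=p^{-1}D_{V}^{(l+1)}\) for \(l=0,\dots,n-2\), and plugging this into the inequalities of \cref{prop:integral/connected-covers} for \((G_{1},\operatorname{Res}_{G_{1}}V)\) turns the \(m\)-th one into the \((m+1)\)-th inequality in the statement. The real work is exactly this identity \(D_{\operatorname{Res}_{G_{1}}V}^{(l)}=p^{-1}D_{V}^{(l+1)}\) --- the behaviour of the invariants under restriction to the index-\(p\) subgroup --- together with verifying that the Artin--Schreier--Witt parametrisation really does identify the locus of covers with component-stabilizer \(G_{s}\), with its stratification, with \(\GCov[G_{s}]^{0}(D)\) compatibly with \(\vfunc_{V}=\vfunc_{V_{s}}\); the remainder is bookkeeping.
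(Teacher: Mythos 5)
Your proposal is correct and follows essentially the same route as the paper: both rest on \cref{prop:integral/connected-covers} together with the identity \(D_{V}^{(m)} = p\,D_{\operatorname{Res}_{G_{1}}V}^{(m-1)}\), proved via the same decomposition of the restriction into indecomposables and the same base-\(p\) digit identity for \(S_{d}^{(m)}\). The only difference is cosmetic: the paper peels off one index-\(p\) subgroup at a time by induction on \(n\) (with \(n=1\) as an external base case), whereas you unroll that induction into a single stratification of \(\GCov(D)\) by component-stabilizers \(G_{s}\) and apply the connected-cover criterion to every stratum at once.
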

\begin{proof}
  We prove by induction on \(n\).  The case \(n = 1\) is just
  \cite[Proposition~6.9]{Yasuda2014:p-Cyclic}.  Let \(H = \setZ/p^{n - 1}\setZ\)
  be the subgroup of \(G\) of index \(p\) and \(W\) the restriction of \(V\) to
  \(H\).  Let us divide the integral as follows:
  \begin{equation}
    \int_{\GCov(D)} \mtvL^{d - \vfunc_{V}}
    = \int_{\GCov[H](D)} \mtvL^{d - \vfunc_{W}}
    + \int_{\GCov^{0}(D)} \mtvL^{d - \vfunc_{V}}.
  \end{equation}
  Note that the necessary and sufficient condition on convergence of
  \(\int_{\GCov[H](D)} \mtvL^{d - \vfunc_{W}}\) is given by the induction
  hypothesis, and one of \(\int_{\GCov^{0}(D)} \mtvL^{d - \vfunc_{V}}\) is by
  \cref{prop:integral/connected-covers}.  From the lemma below, we have
  \begin{align}
    1 - \frac{1}{p^{n - m}}
    - \sum_{l = m}^{n - 1} \frac{D_{V}^{(l)}}{p^{2n - 1 -l}}
    &= 1 - \frac{1}{p^{n - m}}
      - \sum_{l = m}^{n - 1} \frac{p D_{W}^{(l - 1)}}{p^{2n - 1 -l}} \\
    &= 1 - \frac{1}{p^{(n - 1) - (m - 1)}}
      - \sum_{l = m - 1}^{n - 2} \frac{D_{W}^{(l)}}{p^{2 (n - 1) - 1 - l}}
  \end{align}
  for \(m = 1, 2, \dotsc, n - 1\).  Therefore, the convergence of the integral
  \(\int_{\GCov^{0}(D)} \mtvL^{d - \vfunc_{V}}\) implies that of the integral
  \(\int_{\GCov[H](D)} \mtvL^{d - \vfunc_{W}}\), and hence the proof is
  completed.
\end{proof}

\begin{Lemma}
  In the situation of \cref{thm:cond:converge}, for
  \(m = 1, 2, \dotsc, n - 1 \), we have \(D_{V}^{(m)} = p D_{W}^{(m - 1)}\).
\end{Lemma}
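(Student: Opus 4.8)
The plan is to reduce to the indecomposable case and then compare the two sides through the base-\(p\) digit recursion for \(S_{d}^{(m)}\). Since both \(V \mapsto D_{V}^{(m)}\) and the restriction \(V \mapsto W = V|_{H}\) are additive for direct sums, and \(D_{W}^{(m-1)}\) is defined for decomposable \(W\) by additivity, it suffices to treat an indecomposable \(G\)-representation \(V\), say of dimension \(d \le p^{n}\); write \(d = pq + r\) with \(0 \le r < p\).

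First I would determine \(V|_{H}\) explicitly. Identifying \(k[G]\) with \(k[x]/(x^{p^{n}})\) via \(\sigma \mapsto 1 + x\), the indecomposable \(V\) is \(k[x]/(x^{d})\), and since \(p = \operatorname{char} k\) the generator \(\sigma^{p}\) of \(H\) acts as multiplication by \((1+x)^{p} - 1 = x^{p}\). Decomposing \(k[x]/(x^{d})\) into cyclic \(k[x^{p}]\)-modules according to the residue of the exponent modulo \(p\), one gets \(V|_{H} \cong U_{q+1}^{\oplus r} \oplus U_{q}^{\oplus (p-r)}\), where \(U_{s}\) denotes the indecomposable \(H\)-representation of dimension \(s\) and \(U_{0} = 0\) (the cyclic module generated by \(x^{a}\) has dimension \(q+1\) for \(a < r\) and \(q\) for \(r \le a < p\)). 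Consequently \(D_{W}^{(m-1)} = r\, D_{U_{q+1}}^{(m-1)} + (p-r)\, D_{U_{q}}^{(m-1)}\).

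Next I would set up the digit bookkeeping that links the \(n\)-digit expansion used for \(G\) to the \((n-1)\)-digit expansion used for \(H\). If \(q = q_{0} + q_{1}p + \dotsb\) and \(d = d_{0} + d_{1}p + \dotsb\) are the base-\(p\) expansions, then \(d_{0} = r\) and \(d_{j} = q_{j-1}\) for \(j \ge 1\). The recursion \(S_{d}^{(m)} = p\, S_{q}^{(m-1)} + d_{0} d_{m}\) obtained in the proof of the explicit-formula lemma then reads \(S_{d}^{(m)} = p\, S_{q}^{(m-1)} + r\, q_{m-1}\) for \(m \ge 1\), and from \(S_{q+1}^{(j)} - S_{q}^{(j)} = q_{j}\) one gets \(r\, S_{q+1}^{(j)} + (p-r)\, S_{q}^{(j)} = p\, S_{q}^{(j)} + r\, q_{j}\).

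Finally I would substitute. Plugging the recursion into the definition of \(D_{V}^{(m)}\) and shifting the summation index \(l \mapsto l-1\) turns \(D_{V}^{(m)}\) into
\begin{equation*}
  p^{n-1}\!\left[\bigl(p\, S_{q}^{(m-1)} + r\, q_{m-1}\bigr) - (p-1)\sum_{l=m}^{n-2} p^{\,m-1-l}\bigl(p\, S_{q}^{(l)} + r\, q_{l}\bigr)\right];
\end{equation*}
on the other hand, expanding \(r\, D_{U_{q+1}}^{(m-1)} + (p-r)\, D_{U_{q}}^{(m-1)}\) with the defining formula of \(D^{(\cdot)}\) for \(H = \setZ/p^{n-1}\setZ\) and the second digit identity produces exactly \(1/p\) times the displayed quantity, so \(D_{V}^{(m)} = p\, D_{W}^{(m-1)}\). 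The main obstacle is really only the first two steps: getting the \(H\)-module structure of \(V|_{H}\) right and aligning the digit indices and summation ranges between levels \(n\) and \(n-1\) (the shift \(d_{j} = q_{j-1}\), the outer power \(p^{n-1}\) versus \(p^{n-2}\), and the range \(\sum_{l=m}^{n-2}\)). The substitution itself is routine, and it remains only to check that the degenerate cases \(r = 0\) — where \(U_{q+1}\) drops out and need not even exist as an \(H\)-module, so the second identity is used only in its trivial form — and \(m = n-1\) — where both outer sums are empty — are consistent, which they are.
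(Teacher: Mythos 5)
Your proof is correct and follows essentially the same route as the paper: reduce to indecomposable \(V\) by additivity, use the decomposition \(V|_{H} \cong W_{q+1}^{\oplus r} \oplus W_{q}^{\oplus (p-r)}\) with \(d = pq + r\), and combine the digit identities \(S_{d}^{(m)} = pS_{q}^{(m-1)} + r q_{m-1}\) and \(S_{q+1}^{(j)} = S_{q}^{(j)} + q_{j}\) to match the two sides of \(D_{V}^{(m)} = pD_{W}^{(m-1)}\). The only difference is cosmetic: you derive the restriction \(V|_{H}\) directly from the \(k[x]/(x^{p^{n}})\)-module structure, where the paper simply cites \cite[Lemma~4.5]{Tanno2020:Wild}, and you write out the final substitution that the paper leaves as ``follows from the definition.''
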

\begin{proof}
  Since the invariants \(D_{V}^{(m)}\) are additive with respect to direct sum,
  thus we may assume that \(V\) is indecomposable.  Moreover, by
  \cite[Lemma~4.5]{Tanno2020:Wild}, we obtain
  \begin{equation}
    W \simeq W_{q + 1}^{\oplus r} \oplus W_{q}^{\oplus p - r},
  \end{equation}
  where \(d = r + qp\) (\(0 \leq r < p\)) and \(W_{e}\) denotes the
  indecomposable \(G\)-representation of dimension \(e\).  Put
  \(d = d_{0} + d_{1} p + \dotsb d_{n - 1} p^{n - 1}\) (\(0 \leq d_{m} < p\) for
  \(m = 0, 1, \dotsc, n - 2\)).  Note that \(r = d_{0}\) and
  \(q = d_{1} + d_{2} p + \dotsc + d_{n - 1} p^{n - 2}\).  By definition, we
  have
  \begin{align}
    S_{q + 1}^{(m - 1)}
    &= \sum_{\substack{
      0 \leq i_{0} + i_{1} p + \dotsb + i_{n - 2} p^{n - 2} < q + 1, \\
      0 \leq i_{0}, i_{1}, \dotsc, i_{n - 2} < p }} i_{m - 1} \\
    &= \sum_{\substack{
      0 \leq i_{0} + i_{1} p + \dotsb + i_{n - 2} p^{n - 2} < q, \\
      0 \leq i_{0}, i_{1}, \dotsc, i_{n - 2} < p }} i_{m - 1}
      + \sum_{\substack{
      0 \leq i_{0} + i_{1} p + \dotsb + i_{n - 2} p^{n - 2} = q + 1, \\
      0 \leq i_{0}, i_{1}, \dotsc, i_{n - 2} < p }} i_{m - 1} \\
    &= \sum_{\substack{
      0 \leq i_{0} + i_{1} p + \dotsb + i_{n - 2} p^{n - 2} < q, \\
      0 \leq i_{0}, i_{1}, \dotsc, i_{n - 2} < p }} i_{m - 1} + d_{m} \\
    &= S_{q}^{(m - 1)} + d_{m},
  \end{align}
  and hence
  \begin{equation}
    r S_{q + 1}^{(m - 1)} + (p - r) S_{q}^{(m - 1)}
    = p S_{q}^{(m - 1)} + r d_{m}.
  \end{equation}
  Therefore, combining \cref{eq:S_d^(m):induction}, we obtain
  \begin{equation}
    S_{d}^{(m)} = r S_{q + 1}^{(m - 1)} + (p - r) S_{q}^{(m - 1)}.
  \end{equation}
  Now the lemma follows from the definition of \(D_{V}^{(m)}\).
\end{proof}

\begin{Corollary}\label{prop:criterion:canonical}
  Let \(X \coloneq V/G\) be the quotient variety.
  \begin{enumerate}
  \item \(X\) is canonical if the strict inequalities
    \(1 - 1/p^{n - m} - \sum_{l = m}^{n - 1} D_{V}^{(l)}/p^{2n - 1 - l} < 0\)
    (\(m = 0, 1, \dotsc, n - 1\)) hold.  Furthermore, if there is a log
    resolution of \(X\), then the converse is also true.
  \item \(X\) is log canonical if and only if the inequalities
    \(1 - 1/p^{n - m} - \sum_{l = m}^{n - 1} D_{V}^{(l)}/p^{2n - 1 - l} \leq 0\)
    (\(m = 0, 1, \dotsc, n - 1\)) hold.
  \end{enumerate}
\end{Corollary}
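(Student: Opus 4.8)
The plan is to derive both statements by feeding the convergence/boundedness dichotomy of \cref{thm:cond:converge} into the geometric dictionary that relates the integral $\int_{\GCov(D)} \mtvL^{d - \vfunc_{V}}$ to the singularities of $X = V/G$; no new geometric input is needed.

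For part~(1): by \cref{thm:cond:converge} the strict inequalities $1 - 1/p^{n-m} - \sum_{l=m}^{n-1} D_{V}^{(l)}/p^{2n-1-l} < 0$ ($m = 0, \dotsc, n-1$) hold exactly when $\int_{\GCov(D)} \mtvL^{d - \vfunc_{V}}$ converges, and by \cref{thm:Yasuda2014:Prop6.6} convergence of this integral forces $X$ to be canonical, with the converse holding once a log resolution of $X$ exists. Concatenating the two equivalences yields (1), proviso and all.

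For part~(2) the additional ingredient I would invoke is the log canonical analogue of \cref{thm:Yasuda2014:Prop6.6}: $X$ is log canonical if and only if $\int_{\GCov(D)} \mtvL^{d - \vfunc_{V}}$ has terms of bounded-above dimension, that is, $\dim \GCov(D;\vect{j}) + d - \vfunc_{V}(\vect{j}) \le d$ for every stratum $\GCov(D;\vect{j})$. This is the discrepancy estimate of Yasuda (\cite{Yasuda2019:Discrepancies}; see also \cite[Section~4.4]{Tanno2020:Wild} for $G = \setZ/p^{n}\setZ$), which expresses a suitable minimal log discrepancy of $X$ in terms of $\inf_{\vect{j}}\bigl(\vfunc_{V}(\vect{j}) - \dim \GCov(D;\vect{j})\bigr)$, log canonicity being equivalent to this being $\ge 0$. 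Granting it, (2) follows by pairing it with \cref{thm:cond:converge}: the polyhedral argument behind \cref{prop:integral/connected-covers} --- maximizing the linear function $f$ over $\setUtilde_{t}$ at one of its explicit vertices for each $t \gg 0$, and then propagating to all of $\GCov(D)$ by the induction in the proof of \cref{thm:cond:converge} --- shows that the integral has terms of bounded-above dimension if and only if every coefficient $1 - 1/p^{n-m} - \sum_{l=m}^{n-1} D_{V}^{(l)}/p^{2n-1-l}$ is $\le 0$; here $(\Leftarrow)$ is the second assertion of \cref{thm:cond:converge}, and $(\Rightarrow)$ is the contrapositive of the observation that a positive such coefficient makes $f$ unbounded above on $\setUtilde$ along the corresponding family of vertices.

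Two bookkeeping points remain. First, since $D_{V}^{(m)}$, $\vfunc_{V}$, and the dimension count are all additive over direct sums (as used throughout \cref{sec:over-connected-cover}), the decomposable case needs no separate treatment. Second, one should insert a line reconciling hypotheses, as \cref{thm:Yasuda2014:Prop6.6} and the wild McKay correspondence are applied with $V$ effective. The only step that is not pure bookkeeping --- and where I expect the real content to lie --- is pinning down the precise form of the criterion ``bounded-above dimension $\iff$ log canonical'' and checking that in the modular $p^{n}$-cyclic situation the threshold really is $\le 0$ (equivalently, that the pertinent discrepancies are integral, so that nothing falls into the gap between this criterion and the canonical one of \cref{thm:Yasuda2014:Prop6.6}); for that I would lean on \cite{Yasuda2019:Discrepancies} and \cite[Section~4.4]{Tanno2020:Wild} rather than reprove anything.
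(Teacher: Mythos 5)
Your proposal is correct and follows essentially the same route as the paper: part (1) is exactly the concatenation of \cref{thm:cond:converge} with \cref{thm:Yasuda2014:Prop6.6}, and part (2) pairs the ``inequalities $\leq 0$ iff terms of bounded-above dimension'' dichotomy with the log canonical criterion, which the paper sources directly from \cite[Corollary~16.4~(1)]{Yasuda2019:Motivic} rather than from the discrepancy formulation you cite. Your extra remark that the unbounded direction of the dimension claim follows from a positive coefficient making $f$ unbounded along the corresponding vertices is a point the paper glosses over, but it is the same polyhedral argument already present in \cref{prop:integral/connected-covers}.
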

\begin{proof}
  (1).  If the strict inequalities hold, then the integral
  \(\int_{\GCov(D)} \mtvL^{d - \vfunc}\) and hence the stringy motive
  \(\Mst(X)\) converges.  From \cite[Proposition~6.6]{Yasuda2014:p-Cyclic}, we
  obtain the claim.

  (2).  Holding the inequalities is equivalent
  to that the integral \(\int_{\GCov(D)} \mtvL^{d - \vfunc_{V}}\) has terms of
  dimensions bounded above.  Hence, from
  \cite[Corollary~16.4~(1)]{Yasuda2019:Motivic}, we obtain desired conclusion.
\end{proof}

\section{Indecomposable cases}\label{sec:indecomposable-case}
We give more precise estimation for the indecomposable cases.

\begin{Theorem}\label{prop:dimensional-criterion}
  Assume that \(V\) is an indecomposable \(G\)-representation of dimension \(d\)
  which has no pseudo-reflection.  Let \(X \coloneq V/G\) be the quotient
  variety.
  \begin{enumerate}
  \item \(X\) is canonical if \(d \geq p + p^{n - 1}\).  Furthermore, if there
    is a log resolution of \(X\), then the converse is also true.
  \item \(X\) is log canonical if and only if \(d \geq p - 1 + p^{n - 1}\).
  \end{enumerate}
\end{Theorem}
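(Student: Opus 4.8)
The plan is to reduce the statement to the arithmetic criterion of \cref{prop:criterion:canonical} and then to an inequality on the base-\(p\) digits of \(d\).  By \cref{prop:criterion:canonical}, with \(C_m \coloneq 1 - \frac{1}{p^{n-m}} - \sum_{l=m}^{n-1}\frac{D_V^{(l)}}{p^{2n-1-l}}\), the quotient \(X\) is canonical (resp.\ log canonical) exactly when \(C_m < 0\) (resp.\ \(C_m \le 0\)) for all \(m = 0, 1, \dotsc, n-1\), with the same proviso about log resolutions for the converse in the canonical case.  So it suffices to show that, for an indecomposable \(V\) of dimension \(d\) without pseudo-reflection, the \(C_m\) are all \(< 0\) (resp.\ \(\le 0\)) precisely when \(d \ge p + p^{n-1}\) (resp.\ \(d \ge p - 1 + p^{n-1}\)).

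First I would isolate the index \(m = n-1\).  Since \(D_V^{(n-1)} = p^{n-1}S_d^{(n-1)}\), we get \(C_{n-1} = (p - 1 - S_d^{(n-1)})/p\), and \(S_d^{(n-1)}\) is the sum of the leading base-\(p\) digits of \(0, 1, \dotsc, d - 1\).  For \(n \ge 2\) (for \(n = 1\) the statement is degenerate, as \(d \le p\)) a direct count gives \(S_d^{(n-1)} = d - p^{n-1}\) when \(p^{n-1} \le d < 2p^{n-1}\) and \(S_d^{(n-1)} \ge p^{n-1}\) when \(d \ge 2p^{n-1}\); hence \(S_d^{(n-1)} \ge p \iff d \ge p + p^{n-1}\) and \(S_d^{(n-1)} \ge p - 1 \iff d \ge p - 1 + p^{n-1}\).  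This already yields the "only if" directions: if \(X\) is canonical and admits a log resolution then in particular \(C_{n-1} < 0\), so \(S_d^{(n-1)} \ge p\) and \(d \ge p + p^{n-1}\); the log canonical case is the same, using that \cref{prop:criterion:canonical}(2) is an equivalence.

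For the converse the crux is that \(C_{n-1}\) dominates the other \(C_m\), i.e.\ \(C_m \le C_{n-1}\) for all \(m\): given this, \(C_{n-1} < 0\) (resp.\ \(\le 0\)) forces all \(C_m < 0\) (resp.\ \(\le 0\)), and the theorem follows from the computation of \(C_{n-1}\).  To prove \(C_m \le C_{n-1}\) I would use the telescoping identity \(C_m - C_{m+1} = p^{m+1-n}\bigl(1 - \tfrac1p - \tfrac{D_V^{(m)}}{p^{n}}\bigr)\), the bracketed factor being exactly the coefficient of \(u_m\) in the linear function \(f\) from the proof of \cref{prop:integral/connected-covers}; this turns \(C_m \le C_{n-1}\) into the family of inequalities
\[
  \sum_{j=m}^{n-2} p^{j}\bigl(D_V^{(j)} - (p-1)p^{n-1}\bigr) \ge 0 \qquad (m = 0, 1, \dotsc, n-2).
\]
I would verify these by substituting the explicit formula for \(S_d^{(m)}\) in terms of the digits \(d_0, \dotsc, d_{n-1}\), organising the argument as an induction on \(n\) through the restriction relation \(D_V^{(j)} = p\,D_W^{(j-1)}\) (\(j \ge 1\)) of \cref{sec:convergence}, additivity of the \(D^{(\bullet)}\), and the decomposition \(W = V|_H \simeq W_{q+1}^{\oplus r} \oplus W_{q}^{\oplus p - r}\) (\(d = qp + r\), \(H \le G\) of index \(p\)); the hypothesis \(d > 1 + p^{n-1}\) keeps the summands of \(W\) from becoming too small, and at each step only the case \(m = 0\), i.e.\ \(\sum_{j=0}^{n-2}p^{j}D_V^{(j)} \ge p^{2n-2} - p^{n-1}\), is new.

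The main obstacle is this last family of inequalities.  The naive bound \(D_V^{(j)} \ge (p-1)p^{n-1}\), which would make every summand of the displayed inequality nonnegative, genuinely fails for small \(j\) — for \(G = \setZ/8\setZ\) and \(d = 7\) one already has \(D_V^{(0)} = 3 < 4 = (p-1)p^{n-1}\) — so only the aggregated form is true, and the induction must be set up to carry enough slack to absorb these deficits.  Carrying out this digit bookkeeping carefully is where the real work lies.  The remaining clause of (1), that \(d \ge p + p^{n-1}\) gives canonicity even with no known log resolution, is immediate from the "if" part of \cref{prop:criterion:canonical}(1).
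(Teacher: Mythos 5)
Your reduction to \cref{prop:criterion:canonical} and your analysis of the index \(m = n-1\) are correct and efficient: the identity \(C_{n-1} = (p-1-S_d^{(n-1)})/p\) together with \(S_d^{(n-1)} = d - p^{n-1}\) for \(p^{n-1} \le d < 2p^{n-1}\) does give both ``only if'' directions cleanly. The problem is the converse. Your entire ``if'' direction rests on the claim \(C_m \le C_{n-1}\) for all \(m\), equivalently the family \(\sum_{j=m}^{n-2} p^{j}\bigl(D_V^{(j)} - (p-1)p^{n-1}\bigr) \ge 0\), and this is never proved: you observe that the term-by-term bound fails, assert that ``only the aggregated form is true,'' sketch an induction through \(W = V|_H\), and then concede that ``carrying out this digit bookkeeping carefully is where the real work lies.'' That bookkeeping \emph{is} the hard direction of the theorem; as written, the proposal replaces the main difficulty with a plan to address it. (The claim \(C_m \le C_{n-1}\) does appear to hold in examples, but it is a stronger structural statement than the theorem requires, and nothing in your outline guarantees the induction closes --- in particular you have not identified what ``enough slack'' means quantitatively when the deficits \(D_V^{(j)} < (p-1)p^{n-1}\) occur for small \(j\).)

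The paper avoids this issue entirely with a lighter argument, which I would recommend adopting. First, all \(D_V^{(l)}\) are non-negative (this follows from the identity \(\sum_{l=m}^{n-1} p^{l} D_V^{(l)} = p^{n-1}\sum_{l=m}^{n-1}(p^{m-l}-1+p^{l})S_d^{(l)}\) in \cref{prop:D_V:increasing-func}), so for \(m \le n-2\) one may crudely bound \(C_m < 1 - \sum_{l=m}^{n-1} D_V^{(l)}/p^{2n-1-l} \le 1 - D_V^{(n-2)}/p^{n+1} - D_V^{(n-1)}/p^{n}\), keeping only the top one or two terms. Second, one evaluates these bounds only at the two threshold dimensions \(d = p-1+p^{n-1}\) and \(d = p+p^{n-1}\), where the digits of \(d\) are explicit and \(S_d^{(m)}\), hence \(D_V^{(n-1)}\) and \(D_V^{(n-2)}\), can be computed in closed form; at \(d = p+p^{n-1}\) one already gets \(C_m < 1 - D_V^{(n-1)}/p^{n} = 0\), and at \(d = p-1+p^{n-1}\) the two-term bound gives \(C_m < \bigl(2 - p(p-1)(p^{n-1}-4)\bigr)/(2p^{3}) \le 0\) except for \((p,n)=(2,3)\), which is excluded by the no-pseudo-reflection hypothesis. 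Finally, \cref{prop:D_V:increasing-func} (strict monotonicity of \(\sum_{l=m}^{n-1} D_V^{(l)}/p^{2n-1-l}\) in \(d\)) propagates the inequalities to all larger \(d\) and the failure of them to all smaller \(d\). If you want to keep your route, you must actually prove the aggregated inequality; otherwise, substitute the threshold-plus-monotonicity argument.
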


\begin{Lemma}\label{prop:D_V:increasing-func}
  We consider the invariants \(D_{V}^{(m)}\) as functions in variable \(d\).
  Then the sum \(\sum_{l = m}^{n - 1} D_{V}^{(l)} / p^{2n - 1 -l}\) is strictly
  increasing.
\end{Lemma}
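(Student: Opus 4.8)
The plan is to reduce the lemma to an explicit, one-line identity. Write $T_m(d) \coloneq \sum_{l=m}^{n-1} D_V^{(l)}/p^{2n-1-l}$, where $V$ denotes the indecomposable $G$-representation of dimension $d$. Substituting the definition of $D_V^{(l)}$ gives
\[
  \frac{D_V^{(l)}}{p^{2n-1-l}}
  = p^{l-n} S_d^{(l)} - (p-1)\sum_{j=l+1}^{n-1} p^{2l-n-j} S_d^{(j)} .
\]
Summing this over $l = m, \dots, n-1$ and interchanging the order of summation in the double sum (for fixed $j$ the index $l$ runs over $m \leq l \leq j-1$, and $\sum_{l=m}^{j-1} p^{2l} = (p^{2j}-p^{2m})/(p^2-1)$), a short simplification using $p^2-1=(p-1)(p+1)$ yields the closed form
\[
  T_m(d) = p^{m-n} S_d^{(m)}
  + \frac{1}{p+1}\sum_{j=m+1}^{n-1}\bigl(p^{j+1-n} + p^{2m-n-j}\bigr) S_d^{(j)} .
\]
This exhibits $T_m(d)$ as a linear combination of $S_d^{(m)}, S_d^{(m+1)}, \dots, S_d^{(n-1)}$ with \emph{strictly positive} coefficients.

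Next I would use the digit-sum description of $S_d^{(j)}$: it is the sum of the $(j+1)$-th base-$p$ digits of $0, 1, \dots, d-1$, so $S_{d+1}^{(j)} - S_d^{(j)}$ is exactly the $(j+1)$-th base-$p$ digit of $d$, hence $\geq 0$. Thus every $S_d^{(j)}$ is nondecreasing in $d$, and therefore so is $T_m$. For strictness, the increment $T_m(d+1) - T_m(d)$ is the positive-coefficient combination of the base-$p$ digits $d_m, d_{m+1}, \dots, d_{n-1}$ of $d$ read off from the displayed formula, and this is positive exactly when at least one of those digits is nonzero, i.e.\ when $d \geq p^m$. (On the complementary stretch $1 \leq d \leq p^m$ all the $S_d^{(j)}$ with $j \geq m$ vanish and $T_m \equiv 0$; so ``strictly increasing'' is meant on the range $d \geq p^m$, which in particular contains every dimension relevant to \cref{prop:dimensional-criterion}, where $d > p^{n-1} \geq p^m$.)

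I do not anticipate a conceptual obstacle: the statement is computational, and the only delicate point is the bookkeeping in the order-swap and geometric-series step that produces the positive-coefficient closed form. One could alternatively try to induct on $n$ via the identity $D_V^{(m)} = p D_W^{(m-1)}$, but since the restriction $W$ is in general decomposable the direct computation above is cleaner; and once the displayed identity is in hand the monotonicity is immediate from the digit-sum meaning of $S_d^{(j)}$.
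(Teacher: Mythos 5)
Your proof is correct and follows essentially the same route as the paper: both rewrite \(\sum_{l=m}^{n-1} D_{V}^{(l)}/p^{2n-1-l}\) as a linear combination of \(S_{d}^{(m)},\dotsc,S_{d}^{(n-1)}\) with positive coefficients by swapping the order of summation, and then invoke monotonicity of the digit sums \(S_{d}^{(j)}\) in \(d\). Two small points in your favor: your coefficients \(\bigl(p^{j+1-n}+p^{2m-n-j}\bigr)/(p+1)\) are actually the correct ones (the paper's intermediate expression \(p^{m-l}-1+p^{l}\) miscomputes the geometric sum for \(l-m\geq 2\), e.g.\ for \(p=2\), \(n=3\), \(m=0\) the coefficient of \(S_{d}^{(2)}\) in \(\sum_{l}p^{l}D_{V}^{(l)}\) is \(11\), not \(13\), though positivity---all that is needed---survives), and your explicit caveat that strict monotonicity only holds for \(d\geq p^{m}\) is a genuine precision the paper glosses over when it asserts that \(S_{d}^{(n-1)}\) is strictly increasing.
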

\begin{proof}
  By definition, we have
  \begin{align}
    \sum_{l = m}^{n - 1} p^{l} D_{V}^{(l)}
    &= \sum_{l = m}^{n - 1} p^{l} \left(
      - (p - 1) \sum_{j = l + 1}^{n - 1} p^{l - j} S_{d}^{(j)}
      + S_{d}^{(l)}\right) \\
    &= p^{n - 1} \Big( p^{m} S_{d}^{(m)}
    + (- (p - 1) p^{m - (m + 1)} + p^{m + 1}) S_{d}^{(m + 1)} \\
    &\quad + \dotsb \\
    &\quad + (- (p - 1) (p^{m - (n - 1)} + p^{(m + 1) - (n - 1)} + \dotsb
      + p^{(n - 2) - (n - 1)}) + p^{n - 1}) S_{d}^{(n - 1)} \Big) \\
    &= p^{n - 1} \sum_{l = m}^{n - 1} \left(
      - (p - 1) \sum_{j = m}^{l - 1} p^{j - l} + p^{l} \right) S_{d}^{(l)} \\
    &= p^{n - 1} \sum_{l = m}^{n - 1} (p^{m - l} - 1 + p^{l}) S_{d}^{(l)} \\
    &\geq p^{n - 1} \cdot p^{n - 1} S_{d}^{(n - 1)} = p^{n - 1} D_{V}^{(n - 1)}.
  \end{align}
  Since \(D_{V}^{(n - 1)} = p^{n - 1} S_{d}^{(n - 1)}\) is strictly increasing
  with respect to \(d\), thus we get the desired conclusion.
\end{proof}

\begin{Remark}
  From the proof above, \(\sum_{l = m}^{n - 1} p^{l} D_{V}^{(l)}\) is monotone
  decreasing with respect to \(m\).  Since \(D_{V}^{(n - 1)}\) is non-negative,
  thus \(D_{V}^{(l)}\) are all non-negative.
\end{Remark}

\begin{proof}[Proof of \cref{prop:dimensional-criterion}]
  The cases \(n = 1\) and \(n = 2\) are proved in
  \cite{Yasuda2019:Discrepancies} and \cite{Tanno2020:Wild} respectively.  We
  assume that \(n \geq 3\).  It is enough to show that the inequalities in
  \cref{prop:criterion:canonical} hold.

  (2).  We consider the case \(d = p -1 + p^{n - 1}\).  By direct computation,
  we obtain
  \begin{align}
    S_{d}^{(0)} &= \frac{p^{n - 1} (p - 1)}{2} + \frac{(p - 1) (p - 2)}{2}, \\
    S_{d}^{(m)} &=
                  \begin{cases}
                    p^{n - 1} (p - 1)/2 & \text{if \(0 < m < n - 1\)}, \\
                    p - 1               & \text{if \(m = n - 1\)},
                  \end{cases}
  \end{align}
  and hence
  \begin{align}
    D_{V}^{(n - 1)} &= p^{n - 1} (p - 1), \\
    D_{V}^{(n - 2)} &= p^{n - 1} (p - 1) \left(
                      \frac{p^{n - 1}}{2} - \frac{p - 1}{p}\right).
  \end{align}
  For \(m = n - 1\), we have
  \begin{equation}
    1 - \frac{1}{p^{n - (n - 1)}} - \frac{D_{V}^{(n - 1)}}{p^{2n - 1 - (n - 1)}}
    = 1 - \frac{1}{p} - \frac{p - 1}{p}
    = 0.
  \end{equation}
  Since \(D_{V}^{(m)}\) are strictly increasing with respect to \(d\), from
  \cref{prop:criterion:canonical} (2), thus \(X\) is not log canonical when
  \(d < p - 1 + p^{n - 1}\).

  On the other hand, for \(m = 0, 1, \dotsc, n - 2\), we have
  \begin{align}
    1 - \frac{1}{p^{n - m}}
    - \sum_{l = m}^{n - 1} \frac{D_{V}^{(l)}}{p^{2n - 1 -l}}
    &< 1 - \sum_{l = m}^{n - 1} \frac{D_{V}^{(l)}}{p^{2n - 1 -l}} \\
    &\leq 1 - \left(
      \frac{D_{V}^{(n - 2)}}{p^{n + 1}} + \frac{D_{V}^{(n)}}{p^{n}}
      \right).
  \end{align}
  Thus we obtain
  \begin{align}
    1 - \left(
      \frac{D_{V}^{(n - 2)}}{p^{n + 1}} + \frac{D_{V}^{n - 2}}{p^{n + 1}}
    \right)
    &= 1
      - \frac{p - 1}{p^{2}} \left(\frac{p^{n - 1}}{2} - \frac{p - 1}{p}\right)
      - \frac{p - 1}{p} \\
    &= \frac{1}{2 p^{3}} \left(
      2p^{3} - (p - 1) (p^{n} - 2 (p - 1)) - 2 p^{2} (p - 1)\right) \\
    &= \frac{2 - p (p - 1)(p^{n - 1} - 4)}{2 p^{3}}.
  \end{align}
  Therefore, we see that the inequalities in \cref{prop:criterion:canonical} (2)
  hold when \(d \geq p - 1 + p^{n - 1}\).  Hence the quotient \(X = V/G\) is log
  canonical except when \((p, n) = (2, 3)\).  We remark that if
  \((p, n) = (2, 3)\), the \(G\)-representation \(V\) has pseudo-reflections.

  (1).  We consider the case \(d = p + p^{n - 1}\).  Similarly we have
  \begin{equation}
    1 - \frac{1}{p^{n - m}}
    - \sum_{l = m}^{n - 1} \frac{D_{V}^{(l)}}{p^{2n - 1 -l}}
    < 1 - \sum_{l = m}^{n - 1} \frac{D_{V}^{(l)}}{p^{2n - 1 -l}}
    \leq 1 - \frac{D_{V}^{(n - 1)}}{p^{n}}.
  \end{equation}
  By direct computing, we obtain
  \begin{gather}
    S_{d}^{(n - 1)} = p, D_{V}^{(n - 1)} = p^{n},
  \end{gather}
  and hence
  \begin{equation}
    1 - \frac{1}{p^{n - m}}
    - \sum_{l = m}^{n - 1} \frac{D_{V}^{(l)}}{p^{2n - 1 -l}}
    < 1 - \frac{p^{n}}{p^{n}} = 0.
  \end{equation}
  Therefore, then the quotient \(X = V/G\) is canonical if
  \(d \geq p + p^{n - 1}\).
\end{proof}

\section{Finite groups whose \texorpdfstring{\(p\)}{p}-Sylow subgroup is
  cyclic}\label{sec:application}
We can slightly generalize our main result as follows.

\begin{Theorem}\label{prop:application:p^n-Sylow}
  Let \(\tilde{G}\) be a finite group whose \(p\)-Sylow subgroup is
  \(G \cong \setZ / p^{n} \setZ\), and \(\tilde{V}\) a
  \(\tilde{G}\)-representation.  Assume that the restriction \(V\) of
  \(\tilde{V}\) to \(G\) has no pseudo-reflection.  Let
  \(\tilde{X} \coloneq \tilde{V}/\tilde{G}\) and \(X \coloneq V/ G\) be the
  quotient varieties.
  \begin{enumerate}
  \item \(\tilde{X}\) is log terminal if the inequalities
    \(1 - 1/p^{n - m} - \sum_{l = m}^{n - 1} D_{V}^{(l)}/p^{2n - 1 - l} < 0\)
    (\(m = 0, 1, \dotsc, n - 1\)) hold.
  \item \(\tilde{X}\) is log canonical if and only if the inequalities
    \(1 - 1/p^{n - m} - \sum_{l = m}^{n - 1} D_{V}^{(l)}/p^{2n - 1 - l} \leq 0\)
    (\(m = 0, 1, \dotsc, n - 1\)) hold.
  \end{enumerate}
\end{Theorem}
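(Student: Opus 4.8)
The plan is to reduce the statement about $\tilde X = \tilde V/\tilde G$ to the already-established criterion for $X = V/G$ in \cref{prop:criterion:canonical}, using the fact that $G$ is a $p$-Sylow subgroup of $\tilde G$ and that, in characteristic $p$, the "wild part" of the singularity is entirely governed by the $p$-part of the group. Concretely, I would first factor the quotient $\tilde V/\tilde G$ as $(V/G)/(\tilde G/?)$ — more precisely, consider the intermediate quotient $V/G = X$ and the residual action. Since $G\trianglelefteq \tilde G$ need not hold in general, the correct device is to use that the index $[\tilde G : G]$ is prime to $p$: the quotient map $X = V/G \to \tilde V/\tilde G = \tilde X$ is a finite morphism of degree prime to $p$ (a quotient by the tame group $H := \tilde G/G$ acting on $X$, after possibly passing to a suitable model), and such quotients preserve the properties "log terminal" and "log canonical" in both directions. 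This is the standard fact that tame quotient singularities are log terminal and that for a finite morphism $f\colon Y\to Z$ of degree prime to $p$ with $Z$ normal, $Z$ is log terminal (resp.\ log canonical) if and only if $Y$ is — which follows from the behaviour of discrepancies under such covers (pull-back of the relevant $\mathbb{Q}$-divisors, plus the fact that the ramification of $f$ is tame so contributes only non-positive corrections bounded appropriately). The precise reference is the characteristic-free discussion of tame quotients; since the paper already cites \cite{Yasuda2014:p-Cyclic} and \cite{Yasuda2019:Motivic} for the relevant singularity-theoretic inputs, I would invoke the analogue there.

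The key steps, in order, are: (i) observe $H := \tilde G/(\text{normal core of }G)$, or more safely the tame group acting on $X$ so that $\tilde X$ is a further quotient of $X$ by a group of order prime to $p$ — if $G$ is not normal one replaces $\tilde G$ by a group-theoretic construction (e.g. use that $\tilde V/\tilde G = (\tilde V/G)/(\tilde G/G)$ when $G$ is normal, and reduce to that case by Sylow theory since any two $p$-Sylows are conjugate, so the quotient $\tilde V/\tilde G$ only sees $G$ up to conjugacy, hence up to isomorphism of the singularity); (ii) cite that tame quotients preserve log terminality and log canonicity in both directions; (iii) combine with \cref{prop:criterion:canonical}, which translates "$X$ canonical" (hence in particular log terminal, since over a regular variety canonical $\Rightarrow$ klt once we know the singularity is $\mathbb{Q}$-Gorenstein and has a resolution) into the stated inequalities with strict sign, and "$X$ log canonical" into the inequalities with $\leq$. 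For part (1), note that \cref{prop:criterion:canonical}(1) gives that $X$ is canonical when the strict inequalities hold, and a canonical quotient singularity of this type is log terminal; the tame quotient $\tilde X$ is then also log terminal. For part (2), \cref{prop:criterion:canonical}(2) is already an "if and only if", and log canonicity passes both up and down the tame cover, giving the equivalence verbatim.

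The main obstacle I anticipate is step (i): making the reduction to the case where $G$ is normal in $\tilde G$, or otherwise justifying that the singularity type of $\tilde V/\tilde G$ depends only on the $G$-action up to conjugacy. If $G$ is not normal, one cannot literally write $\tilde V/\tilde G$ as a quotient of $\tilde V/G$ by $\tilde G/G$. The cleanest fix is to pass to a point of $X$ lying over the singular point of $\tilde X$ and work \'etale-locally: \'etale-locally over the origin, $\tilde X$ is the quotient of $\tilde V$ (completed at $0$) by $\tilde G$, and by a standard slice/Luna-type argument in the tame-isotropy direction, this is, \'etale-locally, a tame quotient of $V/G$ (completed at $0$). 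Here one uses that the wild inertia everywhere is conjugate into $G$, so after the tame quotient is stripped away the remaining singularity is exactly that of $V/G$. Since the notions "log terminal" and "log canonical" are \'etale-local, this suffices. The second, milder difficulty is confirming that "canonical" in \cref{prop:criterion:canonical}(1) indeed yields "log terminal" for $\tilde X$ rather than merely "canonical" — but canonical singularities are klt (as soon as the ambient object is $\mathbb{Q}$-Gorenstein, which is noted in the \emph{Remark} following the definition of stringy motives, where $X$ is $1$-Gorenstein), so this is immediate, and the tame quotient of a klt singularity is klt.
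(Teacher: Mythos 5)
Your overall strategy---transfer the criterion of \cref{prop:criterion:canonical} for \(X = V/G\) to \(\tilde{X}\) along a finite morphism of degree prime to \(p\)---is the same skeleton as the paper's proof, which simply applies Theorems~6.4 and~6.5 of \cite{Yamamoto2020:Pathological} to the canonical projection \(\pi \colon X \to \tilde{X}\). However, your step~(i) contains a genuine error. When \(G\) is not normal in \(\tilde{G}\), the variety \(\tilde{X} = \tilde{V}/\tilde{G}\) is not a quotient of \(X = \tilde{V}/G\) by any group, and it is not \'etale-locally such a quotient either: at the image of the origin the stabilizer is all of \(\tilde{G}\), so no slice argument can exhibit \(\tilde{X}\) there as a tame quotient of \(V/G\); moreover, Luna-type slice theorems require linear reductivity of the stabilizer, which fails for \(\tilde{G}\) precisely because its \(p\)-Sylow subgroup is nontrivial. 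The correct device---and the one the paper uses---is the canonical finite surjective morphism \(\pi \colon \tilde{V}/G \to \tilde{V}/\tilde{G}\) of degree \([\tilde{G} : G]\) prime to \(p\), which exists with no normality hypothesis; conjugacy of Sylow subgroups plays no role.

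Even after replacing the group quotient by this morphism, your step~(ii) takes as ``standard'' exactly the hard content. The claim that a finite morphism of degree prime to \(p\) between normal varieties preserves klt and lc in both directions is a log-pair discrepancy comparison and requires (a) control of the ramification divisor of \(\pi\) in codimension one---nontrivial here, since \(\tilde{G}\) may contain tame pseudo-reflections not lying in \(G\), so \(\pi\) need not be \'etale in codimension one even though \(\tilde{V} \to X\) is---and (b) verification that the comparison survives in characteristic \(p\) for this particular morphism. These two points are precisely the content of Theorems~6.4 and~6.5 of \cite{Yamamoto2020:Pathological}, which the paper cites rather than reproves; your proposal neither proves them nor can it invoke them as characteristic-free folklore, since the whole point of that reference is that such transfer statements become pathological when wild ramification enters. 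The remaining ingredients of your plan---that canonical implies klt for the \(1\)-Gorenstein variety \(X\), that only ``log terminal'' (not ``canonical'') descends to \(\tilde{X}\) in part~(1), and the appeal to \cref{prop:criterion:canonical}---are correct and agree with the paper.
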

\begin{proof}
  Let \(\pi \colon X \to \tilde{X}\) be the canonical projection.

  (1).  If the inequalities hold, then \(X\) is canonical.  From
  \cite[Theorem~6.5]{Yamamoto2020:Pathological}, we see that \(\tilde{X}\) is
  log terminal.

  (2).  \(X\) is log canonical if and only if the inequalities hold.  If
  \(\tilde{X}\) is log canonical, from the contraposition of
  \cite[Theorem~6.4]{Yamamoto2020:Pathological}, \(X\) is log canonical.
  Conversely, with similar proof as in
  \cite[Theorem~6.5]{Yamamoto2020:Pathological}, we see that if \(X\) is log
  canonical, then \(\tilde{X}\) is log canonical.
\end{proof}

\begin{Remark}
  There are only finitely many indecomposable \(\tilde{G}\)-representations up
  to isomorphism.  Moreover, an explicit formula for the number of
  non-isomorphic ones is given in~\cite{Janusz1966:Indecomposable}.
\end{Remark}

\bibliography{ref}
\bibliographystyle{amsplain}
\end{document}